\newcommand{\BC}{{\mathbb C}}\newcommand{\BD}{{\mathbb D}}
\newcommand{\BR}{{\mathbb R}}
\newcommand{\BT}{{\mathbb T}}
\newcommand{\cD}{{\mathcal D}}
\newcommand{\cH}{{\mathcal H}}
\newcommand{\cK}{{\mathcal K}}\newcommand{\cL}{{\mathcal L}}
\newcommand{\cU}{{\mathcal U}}\newcommand{\cV}{{\mathcal V}}
\newcommand{\cW}{{\mathcal W}}
\newcommand{\fC}{{\mathfrak C}}\newcommand{\fD}{{\mathfrak D}}
\newcommand{\fI}{{\mathfrak I}}
\newcommand{\fR}{{\mathfrak R}}
\newcommand{\wtilF}{\widetilde{F}}
\newcommand{\wtilG}{\widetilde{G}}
\newcommand{\wtilM}{\widetilde{M}}
\newcommand{\wtilQ}{\widetilde{Q}}\newcommand{\wtilR}{\widetilde{R}}
\newcommand{\wtilW}{\widetilde{W}}\newcommand{\wtilX}{\widetilde{X}}
\newcommand{\wtilY}{\widetilde{Y}}
\newcommand{\whatJ}{\widehat{J}}
\newcommand{\be}{\beta}
\newcommand{\ga}{\gamma}
\newcommand{\de}{\delta}
\newcommand{\vep}{\varepsilon}
\newcommand{\la}{\lambda}
\newcommand{\cran}{\textup{R}\overline{\textup{an}}\,}
\newcommand{\ran}{\textup{Ran\,}}
\newcommand{\im}{\textup{Im\,}}
\newcommand{\re}{\textup{Re\,}}
\newcommand{\kr}{\textup{Ker\,}}
\newcommand{\mat}[2]{\ensuremath{\left[\begin{array}{#1}#2\end{array} \right]}}
\newcommand{\sbm}[1]{\left[\begin{smallmatrix} #1\end{smallmatrix}\right]}
\newcommand{\ov}[1]{{\overline{#1}}}
\newcommand{\inn}[2]{\ensuremath{\langle #1,#2 \rangle}}
\newcommand{\wtil}[1]{{\widetilde{#1}}}
\newcommand{\half}{\frac{1}{2}}
\newcommand{\ands}{\quad\mbox{and}\quad}
\newtheorem{theorem}{Theorem}[section]
\newtheorem{corollary}[theorem]{Corollary}
\newtheorem{lemma}[theorem]{Lemma}
\newtheorem{proposition}[theorem]{Proposition}
\newtheorem{example}[theorem]{Example}
\newcommand{\PR}{\textup{PR}}
\newcommand{\OPR}{\textup{PR}_\circ}
\newcommand{\precc}{\prec_\fC}
\newcommand{\simc}{\sim_\fC}
\begin{document}

\title[A pre-order on positive real operators]{A pre-order on positive real operators and its invariance under linear fractional transformations}
\author{S. ter Horst}

\keywords{Positive real operators, operator pre-orders, linear fractional transformations, Carath\'eodory functions}
\subjclass[2010]{Primary 47A62; Secondary 47A56, 47A57}

\date{}

\maketitle

\begin{abstract}
A pre-order and equivalence relation on the class of positive real Hilbert space operators are introduced, in correspondence with similar relations for contraction operators defined by Yu.L. Shmul'yan in \cite{S80}. It is shown that the pre-order, and hence the equivalence relation, are preserved by certain linear fractional transformations. As an application, the operator relations are extended to the class $\fC(\cU)$ of Carath\'eodory functions on the unit disc $\BD$ of $\BC$ whose values are operators on a finite dimensional Hilbert space $\cU$. With respect to these relations on $\fC(\cU)$ it turns out that the associated linear fractional transformations of $\fC(\cU)$ preserve the equivalence relation on their natural domain of definition, but not necessarily the pre-order, paralleling similar results for Schur class functions in \cite{tHsub}.
\end{abstract}

\setcounter{section}{-1}
\section{Introduction}
\setcounter{equation}{0}

In this paper we introduce a pre-order on the set of positive real Hilbert space operators and prove that this pre-order, and its associated equivalence relation, are preserved by linear fractional transformations of the type extensively studied by V.M. Potapov, cf., \cite{AD08} and the references therein. The pre-order is similar to one defined by Yu.L. Shmul'yan in \cite{S80} for contractions, which was recently extended in \cite{tHsub} to Schur class functions. Due to certain properties of the map $A\mapsto \re(A):=\half(A+A^*)$ the proofs are more transparent and a more complete characterization of the equivalence relation is obtained. As an application of our results, we can (partially) extend our results to the class of Carath\'eodory functions, paralleling the main results of \cite{tHsub}, directly at the level of functions and without considering Toeplitz operators.

In order to state our results more precisely, we require some preliminaries. Let $\cH$ and $\cK$ be Hilbert spaces. We write $\cL(\cH,\cK)$ for the set of operators mapping $\cH$ into $\cK$. If $\cH=\cK$ we abbreviate $\cL(\cH,\cK)$ to $\cL(\cH)$. Here ``operator on $\cH$'' means a bounded linear map. Moreover, invertibility of a Hilbert space operator will always mean boundedly invertible. With $\PR(\cH)$ we indicate the set of positive real operators on $\cH$, that is, the operators $A\in\cL(\cH)$ with real part $\re(A)$ a positive operator, notation $\re(A)\geq0$. The imaginary part $\frac{1}{2i}(A-A^*)$ of $A$ is denoted by $\im(A)$ and the support subspaces of $\re(A)$ and $\im(A)$ are denoted by $\fR_A$ and $\fI_A$, respectively. Further, the set of invertible operators in $\PR(\cH)$ is denoted by $\OPR(\cH)$, equivalently, these are the operators on $\cH$ whose real parts are strictly positive, notation $\re(A)>0$.

Given $A,B\in\PR(\cH)$, we write $A\prec B$ if
\begin{align}\label{PreorderIntro}
A-B= \re(B)^{\half}X \re(B)^{\half}\quad \mbox{ for some }\quad X\in\fR_B.
\end{align}
In Theorem \ref{T:prec} below it is proved that $\prec$ defines a pre-order on $\PR(\cH)$ and several reformulations of $A\prec B$ are given. According to Lemma \ref{L:defrel1} below,
The relation $A\prec B$ implies the range-inclusion $\ran\re(A)\subset\ran\re(B)$ and thus $\fR_A\subset\fR_B$. Similarly, Theorem \ref{T:eqrel} provides a few characterizations of the associated equivalence relation, denoted by $\sim$. In particular, it is shown that $A\sim B$ holds if and only if $\fR_A=\fR_B$ and
\begin{align}\label{EquivIntro}
A-B= \re(A)^{\half}\wtilX \re(B)^{\half}\quad \mbox{ for some }\quad \wtilX\in\cL(\fR_B).
\end{align}
Since $\re$ has the following properties:
\[
\re(A+B)=\re(A)+\re(B)\ands \re(A^*)=\re(A),
\]
the proofs for the positive real case are simpler and more transparent than for the case of contraction operators, leading to a more complete characterization for the equivalence relation than the one obtained in \cite{KSS91}, cf., \cite[Thoerem 1.6]{tHsub}.

Now define $J$ and $\whatJ$ to be the signature matrices in $\cL(\cH\oplus\cH)$ given by 
\begin{align}\label{JJhat}
J=\mat{cc}{0&-I\\-I&0}\ands \whatJ=\mat{cc}{I&0\\0&-I}.
\end{align}
Let $W\in \cL(\cH\oplus\cH)$ be $J$-contractive, that is,
\begin{align}\label{Wcond}
W^*J W\leq J.
\end{align}
In terms of the entries of the $2\times 2$ block decomposition $W=\sbm{W_{11}&W_{12}\\W_{21}&W_{22}}$ of $W$ this means
\begin{align*}
\mat{cc}{2\re(W_{11}^*W_{21}) & W_{21}^*W_{12}+W_{11}^*W_{22}-I\\
W_{12}^*W_{21}+W_{22}^*W_{11}-I & 2\re(W_{12}^*W_{22})}\geq 0.
\end{align*}
With $W$ we associate the linear fractional transformation (LFT) $T_W$ defined by
\begin{align}\label{LFT}
T_W[A]:=(W_{11}A+W_{12})(W_{21}A+W_{22})^{-1}\quad (A\in\fD_W).
\end{align}
Here $\fD_W:=\{A\in \PR(\cH)\, \colon\, \mbox{$W_{21}A+W_{22}$ invertible}\}$ is the domain of $T_W$. The assumption \eqref{Wcond} implies that $\OPR(\cH)\subset \fD_W$, in particular, $W_{22}$ is invertible, and that $T_W$ maps $\fD_W$ into $\PR(\cH)$. For more details we refer to \cite[Section 2.10]{AD08}; one easily verifies that the `finite dimensional' algebraic results proved there extend to general Hilbert spaces.

Assume $W$ is invertible. Following Section 2.3 in \cite{AD08} we define $\wtilW\in \cL(\cH\oplus\cH)$ by
\begin{align}\label{Wtil}
\wtilW=\whatJ W^{-1}\whatJ
=:\mat{cc}{\wtilW_{11}&\wtilW_{12}\\\wtilW_{21}&\wtilW_{22}}.
\end{align}
Then $\fD_W=\{A\in \PR(\cH) \colon \mbox{$\wtilW_{11}+A\wtilW_{21}$ invertible}\}$ and $T_W$ can be written as
\begin{align}\label{AltLFT}
T_W[A]=(\wtilW_{11}+A\wtilW_{21})^{-1}(\wtilW_{12}+A\wtilW_{22})\quad (A\in\fD_W).
\end{align}

Our first main result is the following theorem.

\begin{theorem}\label{T:MainPrec}
Let $W\in \cL(\cH\oplus\cH)$ be invertible and assume \eqref{Wcond} is satisfied. Then $T_W$ preserves the pre-order $\prec$ on $\PR(\cH)$ restricted to $\fD_W$.
\end{theorem}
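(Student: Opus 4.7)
The plan is to combine a clean algebraic identity for $T_W[A]-T_W[B]$ with Douglas-type factorizations coming from real-part inequalities for $T_W[B]$. Throughout, set $\Ga_X:=\wtilW_{11}+X\wtilW_{21}$ and $N_X:=W_{21}X+W_{22}$. First I would establish the difference identity
\[
T_W[A]-T_W[B]=\Ga_A^{-1}(A-B)N_B^{-1},
\]
which follows by comparing $N_X T_W[X]=W_{11}X+W_{12}$ for $X=A,B$ together with the algebraic identities $W_{11}-T_W[A]W_{21}=\Ga_A^{-1}$ and $\wtilW_{22}-\wtilW_{21}T_W[B]=N_B^{-1}$; both of the latter are direct consequences of the block relations encoded in $\wtilW\whatJ W=\whatJ$.

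Next I would derive the two operator inequalities
\[
\re(T_W[B])\geq N_B^{-*}\re(B)N_B^{-1}\ands\re(T_W[B])\geq\Ga_B^{-1}\re(B)\Ga_B^{-*}.
\]
The first comes from $W^*JW\leq J$ applied to the embedding $\sbm{T_W[B]\\I}=W\sbm{B\\I}N_B^{-1}$, combined with $\sbm{B^*&I}J\sbm{B\\I}=-2\re(B)$. The second follows analogously from $\wtilW J\wtilW^*\leq J$ applied to the dual embedding $\sbm{I&T_W[B]}=\Ga_B^{-1}\sbm{I&B}\wtilW$. Applying Douglas' lemma to these two inequalities produces bounded $C,D$ satisfying
\[
\re(B)^{1/2}N_B^{-1}=C^*\re(T_W[B])^{1/2}\ands\Ga_B^{-1}\re(B)^{1/2}=\re(T_W[B])^{1/2}D,
\]
where in the canonical Douglas form $C^*:\fR_{T_W[B]}\to\fR_B$ and $D:\fR_B\to\fR_{T_W[B]}$.

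Finally, I would assemble the proof. Using the resolvent identity $\Ga_A^{-1}-\Ga_B^{-1}=-\Ga_B^{-1}(A-B)\wtilW_{21}\Ga_A^{-1}$, I split
\[
T_W[A]-T_W[B]=\Ga_B^{-1}(A-B)N_B^{-1}-\Ga_B^{-1}(A-B)\wtilW_{21}\Ga_A^{-1}(A-B)N_B^{-1},
\]
and substitute $A-B=\re(B)^{1/2}X\re(B)^{1/2}$ with $X\in\cL(\fR_B)$. The outer factors $\Ga_B^{-1}\re(B)^{1/2}$ and $\re(B)^{1/2}N_B^{-1}$ get absorbed via the Douglas factorizations, producing
\[
T_W[A]-T_W[B]=\re(T_W[B])^{1/2}\,Y\,\re(T_W[B])^{1/2}
\]
with $Y=DX\bigl(I-\re(B)^{1/2}\wtilW_{21}\Ga_A^{-1}\re(B)^{1/2}X\bigr)C^*\in\cL(\fR_{T_W[B]})$. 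This is precisely $T_W[A]\prec T_W[B]$.

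The main obstacle is justifying $\wtilW J\wtilW^*\leq J$, equivalently $WJW^*\leq J$, from the given $W^*JW\leq J$ and the invertibility of $W$. This bi-$J$-contractivity is the only non-algebraic ingredient, and it is a standard Krein-space fact: invertible continuous $J$-contractions with respect to a fundamental symmetry are automatically $J$-bicontractions.
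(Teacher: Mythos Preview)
Your proposal is correct and follows essentially the same approach as the paper. The paper's proof (Theorem~\ref{T:MainPrecSpec}, relying on Lemma~\ref{L:UseIneq}) uses exactly the same three ingredients: the difference identity $T_W[A]-T_W[B]=\Ga_A^{-1}(A-B)N_B^{-1}$, the two real-part inequalities yielding Douglas factorizations through $\re(T_W[B])^{1/2}$, and the resolvent identity for $\Ga_A^{-1}-\Ga_B^{-1}$ to convert the outer $\Ga_A^{-1}$ into $\Ga_B^{-1}$; the resulting operator $Y$ matches the paper's $X_W=\wtilM_B^*(I-X\re(B)^{1/2}\wtilW_{21}\Ga_A^{-1}\re(B)^{1/2})XM_B$ after the trivial rewriting $X(I-KX)=(I-XK)X$. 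The bi-$J$-contractivity you flag as the ``main obstacle'' is exactly what the paper invokes by citing \cite[Lemma~2.3]{AD08}.
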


Consequently, we obtain the following result.

\begin{theorem}\label{T:MainEquiv}
Let $W\in \cL(\cH\oplus\cH)$ be invertible and assume \eqref{Wcond} is satisfied. Then $T_W$ preserves the equivalence relation $\sim$ on $\PR(\cH)$ restricted to $\fD_W$.
\end{theorem}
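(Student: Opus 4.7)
The proof is essentially a one-line corollary of Theorem \ref{T:MainPrec}, and my plan is to present it as such. The relation $\sim$ on $\PR(\cH)$ is, by construction, the equivalence relation associated with the pre-order $\prec$; that is, $A\sim B$ holds precisely when both $A\prec B$ and $B\prec A$. (This is the standard antisymmetrization of a pre-order, and the characterizations in Theorem~\ref{T:eqrel}, such as the one via \eqref{EquivIntro}, are reformulations of this.)

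Given this, the argument runs as follows. Fix $A,B\in\fD_W$ with $A\sim B$. Unpacking, we have $A\prec B$ and $B\prec A$. Apply Theorem \ref{T:MainPrec} twice: since $T_W$ preserves $\prec$ on $\fD_W$, we get $T_W[A]\prec T_W[B]$ from $A\prec B$, and $T_W[B]\prec T_W[A]$ from $B\prec A$. Combining these two conclusions yields $T_W[A]\sim T_W[B]$, which is what we needed.

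The only thing to double-check is that both $T_W[A]$ and $T_W[B]$ lie in $\PR(\cH)$ so that $\sim$ makes sense at the image, but this is already guaranteed by the remarks after \eqref{LFT}: condition \eqref{Wcond} ensures $T_W$ maps $\fD_W$ into $\PR(\cH)$.

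There is no real obstacle here — all the work has been done in Theorem \ref{T:MainPrec}. The only conceivable subtlety would be if $\sim$ were defined by a stronger condition than mutual $\prec$-domination (for instance, directly via \eqref{EquivIntro}), in which case one would need to invoke the equivalence of the two characterizations from Theorem \ref{T:eqrel}; but this equivalence is already established earlier in the paper, so no additional argument is required.
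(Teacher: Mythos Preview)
Your argument is correct: since $\sim$ is by definition the symmetric part of $\prec$, two applications of Theorem~\ref{T:MainPrec} immediately yield the claim. The paper, however, takes a different and in fact more direct route. Rather than passing through $\prec$, it works with the characterization (ERi) of $\sim$ (namely $A-B=\re(A)^{1/2}\wtilX\,\re(B)^{1/2}$) and combines the identity and the two inequalities of Lemma~\ref{L:UseIneq} to obtain
\[
T_W[A]-T_W[B]=\re(T_W[A])^{1/2}\,\wtilX_W\,\re(T_W[B])^{1/2},\qquad \wtilX_W=\wtilM_A^*\wtilX M_B,
\]
with $M_B,\wtilM_A$ contractions. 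This is Theorem~\ref{T:MainEquivSpec}. The point is not merely that this argument is shorter (it avoids the resolvent-type manipulation in the proof of Theorem~\ref{T:MainPrecSpec}); it also yields the quantitative bound $\|\wtilX_W\|\leq\|\wtilX\|$, which your deduction from Theorem~\ref{T:MainPrec} does not give. That bound is essential later, in the proof of Theorem~\ref{T:MainCaraEquiv}, where one must control the supremum norm of the corresponding function $\wtilQ_\Psi$ uniformly over $\BD$. So while your proof of the bare statement is fine, the paper's approach buys extra information that is actually used downstream.
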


Both theorems will be proved in Section \ref{S:LFT}, in an extended form. It turns out that Theorem \ref{T:MainEquiv} is considerably easier to prove and doing so leads to an observation that will be of use in the sequel: for $A,B\in \PR(\cH)$ such that $A\sim B$, with $\wtilX$ as in \eqref{EquivIntro}, the operator that established the equivalence $T_W[A]\sim T_W[B]$ (as in \eqref{EquivIntro} but with $\wtilX_W$ instead of $X$) satisfies $\|\wtilX_W\|\leq \|\wtilX\|$.

Now let $\cU$ be a finite dimensional Hilbert space. We write $\fC(\cU)$ for the Carath\'eodory class consisting of holomorphic $\PR(\cU)$-valued functions on the open unit disc $\BD=\{\la\in\BC\,\colon\, |\la|<1\}$. The strict Carath\'eodory class is denoted by $\fC_\circ(\cU)$ and consists of functions $F\in \fC(\cU)$ such that there exists a $\rho>0$ with $\re(F)(\la)\geq\rho I$ for each $\la\in\BD$.

Let $F\in\fC(\cU)$. Consider Theorem 5 from \cite{S76} with $J$ as defined above and $T(\la)=\sbm{F(\la)&I\\I&0}$, taking $\la=\la_0=:\la_1$ and $\mu=\mu_0=:\la_2$. This yields the existence of a function $\Phi:\BD^2\to\cL(\cU)$ such that
\begin{align*}
F(\la_1)-F(\la_2)=\re(F)(\la_1)\Phi(\la_1,\la_2)\re(F)(\la_2)\quad (\la_1,\la_2\in\BD).
\end{align*}
In view of \eqref{EquivIntro}, this means that $F(\la_1)\sim F(\la_2)$ for any two points $\la_1,\la_2\in\BD$, and thus, by Lemma \ref{L:defrel1} below, that $\fR_{F(\la)}$ is independent of the choice of $\la\in\BD$. Hence, we can define
\[
\fR_F:=\fR_{F(\la)}\quad \mbox{with $\la\in\BD$ arbitrary.}
\]
The function $F$ can be extended a.e.\ to a $\PR(\cU)$-valued function on the unit circle $\BT:=\{\la\in\BC\,\colon\, |\la|=1\}$ by taking non-tangential limits. On the boundary $\BT$ we have the inclusion $\fR_F\subset \fR_{F(\tau)}$, for a.e.\ $\tau\in\BT$, by the maximum principle, but in general not the reversed inclusion.

Since for any function in $\fC(\cU)$ the values on $\BD$ are all pairwise equivalent, for $F,G\in\fC(\cU)$ to satisfy $F(\la)\sim G(\la)$ for all $\la\in\BD$ it suffices to verify similarity of $F$ and $G$ at any one point of $\BD$. Hence the functions $\la\mapsto 1$ and $\la\mapsto (1+\la)(1-\la)^{-1}$ are pointwise equivalent on $\BD$. However, they have very different boundary behavior at $\la=1$. It turns out that the more natural extension of the pre-order and equivalence relation from Section \ref{S:pre-eq} to the Carath\'eodory class $\fC(\cU)$ involves an additional uniformity constraint. For $F,G\in\fC(\cU)$ we write $F\precc G$ if
\begin{equation}\label{CaraPre}
\begin{aligned}
&F-G= \re(G)^{\half}Q \re(G)^{\half},&\\
&\mbox{with $Q$ a bounded $\cL(\fR_G)$-valued function on $\BD$.}&
\end{aligned}
\end{equation}
Various characterizations of this pre-order, and the associated equivalence relation (denoted $\simc$), are proved in Theorem \ref{T:preceqC} below. In particular, $F\precc G$ is equivalent to $G-\vep(F-G)$ being in $\fC(\cU)$ for $\vep\in\BC$ sufficiently small. 
The most interesting implications of $F\precc G$ are with respect to the boundary behavior of $F$ and $G$. Proposition \ref{P:limbehavior} below shows that for any $u\in\cU$ and $\be\in\BT$, $\lim_{\la\to \be} G(\la)u=0$ implies $\lim_{\la\to \be} F(\la)u=0$ ( with convergence either both nontangentially or both unrestrictedly) provided $F\precc G$ and $G(\be)$ exists in $\PR(\cU)$.

Next we consider how Theorems \ref{T:MainPrec} and \ref{T:MainEquiv} can be extended to $\fC(\cU)$. Let $\Psi=\sbm{\Psi_{11}&\Psi_{12}\\ \Psi_{21}&\Psi_{22}}$ be a $\cL(\cU\oplus\cU)$-valued holomorphic function on $\BD$ such that
\begin{align}\label{Psicond}
\Psi(\la)^*J\Psi(\la)\leq J\qquad (\la\in\BD).
\end{align}
Then we can define the linear fractional transformation $T_\Psi$ by
\begin{align*}
T_\Psi[F]:=(\Psi_{11}+\Psi_{12}F)(\Psi_{22}+\Psi_{21}F)^{-1}\quad (F\in\fD_\Psi).
\end{align*}
Here all operations are pointwise and
\[
\fD_\Psi:=\{F\in \fC(\cU)\, \colon\, \mbox{$\Psi_{22}+\Psi_{21}F$ is invertible at each point of $\BD$}\}
\]
is the domain of $T_W$. Condition \eqref{Psicond} implies $\fC_\circ(\cU)\subset\fD_\Psi$ and $T_\Psi$ maps $\fD_\Psi$ into $\fC(\cU)$. In particular, $\Psi_{22}$ is invertible at each point of $\BD$. In order to extend Theorem \ref{T:MainEquiv} to $\fC(\cU)$, it suffices to assume that $\det(\Psi)\not\equiv 0$, which, due to the analyticity of $\Psi$, is equivalent to $\Psi$ being invertible at all except for a few isolated points of $\BD$. This condition is met in case $\Psi$ is $J$-unitary (see \cite[Section 4.1]{AD08}), i.e., if in addition \eqref{Psicond} holds with equality a.e.\ on $\BT$.

\begin{theorem}\label{T:MainCaraEquiv}
Let $\Psi$ be an $\cL(\cU\oplus\cU)$-valued holomorphic function on $\BD$ such that \eqref{Psicond} holds. Assume $\det(\Psi)\not\equiv0$. Then $T_\Psi$ preserves the equivalence relation $\simc$ on $\fC(\cU)$ restricted to $\fD_\Psi$.
\end{theorem}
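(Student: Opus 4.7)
The plan is to apply Theorem \ref{T:MainEquiv} pointwise wherever $\Psi(\la)$ is invertible, and then extend to the remaining discrete set by a finite-dimensional compactness argument.

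First, I would invoke the characterization of $\simc$ from Theorem \ref{T:preceqC} to recast the hypothesis $F\simc G$ as the existence of a bounded function $\wtilQ:\BD\to\cL(\fR_G,\fR_F)$ satisfying
\[
F(\la)-G(\la)=\re(F)(\la)^{\half}\wtilQ(\la)\re(G)(\la)^{\half}\quad(\la\in\BD),
\]
with $M:=\sup_{\la\in\BD}\|\wtilQ(\la)\|<\infty$. At every individual $\la$ this simultaneously exhibits $F(\la)\sim G(\la)$ in $\PR(\cU)$ implemented by the operator $\wtilQ(\la)$, and the important feature is that the norms $\|\wtilQ(\la)\|$ are uniformly bounded by $M$.

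The analyticity of $\Psi$ together with the hypothesis $\det(\Psi)\not\equiv0$ forces the exceptional set $E:=\{\la\in\BD:\Psi(\la)\text{ is not invertible}\}$ to be discrete. For $\la\in\BD\setminus E$ the pointwise action $T_\Psi[F](\la)$ coincides with an operator-level LFT of $F(\la)$ associated with a block rearrangement $W(\la)$ of $\Psi(\la)$ which inherits invertibility from $\Psi(\la)$ and $J$-contractivity from \eqref{Psicond}. Theorem \ref{T:MainEquiv}, combined with the norm bound observation recorded immediately after its statement, then produces $\wtilQ_\Psi(\la)\in\cL(\fR_{T_\Psi[G](\la)})$ with
\[
T_\Psi[F](\la)-T_\Psi[G](\la)=\re(T_\Psi[F])(\la)^{\half}\wtilQ_\Psi(\la)\re(T_\Psi[G])(\la)^{\half}
\]
and $\|\wtilQ_\Psi(\la)\|\leq\|\wtilQ(\la)\|\leq M$. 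This defines $\wtilQ_\Psi$ as a bounded function on $\BD\setminus E$.

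To extend $\wtilQ_\Psi$ across $E$ without exceeding the sup-norm $M$, I fix $\la_0\in E$ and pick a sequence $\la_n\to\la_0$ in $\BD\setminus E$. Because $\cU$ is finite dimensional and $\fR_{T_\Psi[G]}$ is independent of the evaluation point (a Carath\'eodory feature recalled in the introduction), the operators $\wtilQ_\Psi(\la_n)$ all live in a compact ball of radius $M$ inside the finite-dimensional space $\cL(\fR_{T_\Psi[G]})$. Extracting a convergent subsequence and passing to the limit in the displayed identity, using holomorphy of $T_\Psi[F]$ and $T_\Psi[G]$ together with continuity of $A\mapsto\re(A)^{\half}$ on $\PR(\cU)$, yields a value $\wtilQ_\Psi(\la_0)$ of norm at most $M$ satisfying the identity at $\la_0$. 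Patching these choices produces a bounded $\cL(\fR_{T_\Psi[G]})$-valued function $\wtilQ_\Psi$ on all of $\BD$, and the characterization from Theorem \ref{T:preceqC} delivers $T_\Psi[F]\simc T_\Psi[G]$.

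The main obstacle I anticipate is the index bookkeeping required in the second step: the conventions adopted for the function-level $T_\Psi$ in the excerpt differ from those of the operator-level $T_W$, so identifying the correct pointwise block rearrangement $W(\la)$ of $\Psi(\la)$ and verifying that $W(\la)^*JW(\la)\leq J$ descends from $\Psi(\la)^*J\Psi(\la)\leq J$ needs care. Once that pointwise reduction is secured, the compactness argument at the isolated singularities of $\Psi$ is routine, and the uniform bound $M$, precisely the content of the inequality $\|\wtilX_W\|\leq\|\wtilX\|$ highlighted after Theorem \ref{T:MainEquiv}, supplies exactly the control needed to remain within the class of bounded $Q$'s demanded by the definition of $\simc$.
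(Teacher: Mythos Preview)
Your proposal is correct and follows the same overall strategy as the paper: recast $F\simc G$ via (CERi), apply the operator-level result (Theorem~\ref{T:MainEquivSpec}) pointwise on the open set where $\Psi$ is invertible using the key inequality $\|\wtilX_W\|\leq\|\wtilX\|$, and then repair the isolated zeros of $\det\Psi$.

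The only genuine difference is in how the isolated points are handled. The paper first observes that $T_\Psi[F](\la)\sim T_\Psi[G](\la)$ at such $\la$ by transitivity (since all values of a fixed Carath\'eodory function are $\sim$-equivalent), and then invokes a separate continuity lemma (Lemma~\ref{L:contin}) to conclude that the resulting $\wtilQ_\Psi$ is continuous on all of $\BD$, so the sup-norm bound carries over. Your compactness argument bypasses both steps: you extract a subsequential limit of $\wtilQ_\Psi(\la_n)$ in the finite-dimensional ball of radius $M$ and pass to the limit in the factorization identity using continuity of $A\mapsto\re(A)^{1/2}$. This is more elementary and self-contained, at the cost of not producing a continuous $\wtilQ_\Psi$ (which is not needed for the statement anyway). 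The index-bookkeeping concern you flag is real but minor; the paper itself simply sets $W=\Psi(\la)$ without comment, so whatever rearrangement is needed is implicit in both arguments.
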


Theorem \ref{T:MainPrec} does not extend to $\fC(\cU)$ on the domain $\fD_\Psi$. See Example \ref{E:ce} below. One has to reduce the domain to a smaller set. As before, assume $\det(\Psi)\not\equiv0$. Write $\BD_\Psi$ for the subset of $\BD$ where $\Psi$ is invertible. Define $\wtil{\Psi}:=\whatJ\Psi^{-1}\whatJ$ on $\BD_\Psi$. Then for all $F\in\cD_\Psi$ we have
\[
T_\Psi[F]=(\wtil{\Psi}_{11}+A\wtil{\Psi}_{21})^{-1}(\wtil{\Psi}_{12}+A\wtil{\Psi}_{22})\quad \mbox{on}\quad \BD_\Psi,
\]
with $\wtil{\Psi}_{ij}$ the entries from the standard $2\times 2$ block decomposition of $\wtil{\Psi}$. Write $H^\infty(\cU)$ for the Hardy class of bounded holomorphic $\cL(\cU)$-valued functions on $\BD$. We now define the reduced domain for $T_\Psi$ by
\[
\fD_\Psi^\circ=\{F\in\fC(\cU)\cap H^\infty(\cU)\,\colon\, (\wtil{\Psi}_{11}+F\wtil{\Psi}_{21})^{-1} \mbox{ exists and is bounded on } \BD_\Psi\}.
\]

\begin{theorem}\label{T:MainCaraPrec}
Let $\Psi$ be an $\cL(\cU\oplus\cU)$-valued holomorphic function on $\BD$ such that \eqref{Psicond} holds. Assume $\det(\Psi)\not\equiv0$. Then $T_\Psi$ preserves the pre-order $\precc$ on $\fC(\cU)$ restricted to $\fD_\Psi^\circ$.
\end{theorem}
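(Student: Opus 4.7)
My plan is to reduce to the operator-level Theorem \ref{T:MainPrec} applied pointwise on $\BD_\Psi$, establish uniform boundedness of the witnessing operator using the $\fD_\Psi^\circ$ hypotheses, and then extend across the isolated complement $\BD\setminus\BD_\Psi$. Given $F,G\in\fD_\Psi^\circ$ with $F\precc G$, I would first invoke \eqref{CaraPre} to write $F - G = \re(G)^{1/2}Q\re(G)^{1/2}$ with $Q$ a bounded $\cL(\fR_G)$-valued function on $\BD$. For each $\la\in\BD_\Psi$ the operator $\Psi(\la)$ is invertible and $J$-contractive, so Theorem \ref{T:MainPrec} yields $T_\Psi[F](\la)\prec T_\Psi[G](\la)$ in $\PR(\cU)$. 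Since $\cU$ is finite dimensional and $\fR_{T_\Psi[G](\la)}$ is $\la$-independent (as noted after \eqref{EquivIntro}), this produces a uniquely determined $Q_\Psi(\la)\in\cL(\fR_{T_\Psi[G]})$ satisfying
\[
T_\Psi[F](\la)-T_\Psi[G](\la)=\re(T_\Psi[G])(\la)^{1/2}Q_\Psi(\la)\re(T_\Psi[G])(\la)^{1/2}\qquad(\la\in\BD_\Psi).
\]

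The hard part will be showing $\sup_{\la\in\BD_\Psi}\|Q_\Psi(\la)\|<\infty$. For this I would exploit a quantitative form of Theorem \ref{T:MainPrec} to be developed in Section \ref{S:LFT}, in the spirit of the observation $\|\wtilX_W\|\le\|\wtilX\|$ recorded after Theorem \ref{T:MainEquiv} for the equivalence case. Such a version should express the new witnessing operator $X_W$ in $T_W[A]-T_W[B]=\re(T_W[B])^{1/2}X_W\re(T_W[B])^{1/2}$ as an explicit product containing the original $X$, an inverse factor of the form $Q_B^{-1}=(W_{21}B+W_{22})^{-1}$ or its $\wtilW$-side counterpart $(\wtilW_{11}+B\wtilW_{21})^{-1}$, and a real-part adjustment factor relating $\re(B)^{1/2}$ with $\re(T_W[B])^{1/2}$. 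Substituting $W=\Psi(\la)$, $A=F(\la)$, $B=G(\la)$, each factor is controlled on $\BD_\Psi$ precisely by the defining conditions of $\fD_\Psi^\circ$: both $F$ and $G$ lie in $H^\infty(\cU)$, and $(\wtil{\Psi}_{11}+F\wtil{\Psi}_{21})^{-1}$ and $(\wtil{\Psi}_{11}+G\wtil{\Psi}_{21})^{-1}$ are bounded on $\BD_\Psi$. Combined with the boundedness of $Q$, this yields the uniform bound.

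To conclude I would extend $Q_\Psi$ across $\BD\setminus\BD_\Psi$. Since $\det\Psi\not\equiv 0$, this complement is discrete; meanwhile $T_\Psi[F]$, $T_\Psi[G]$, $\re(T_\Psi[G])$ remain defined and real-analytic on all of $\BD$ (as $F,G\in\fD_\Psi$), and $\re(T_\Psi[G])(\la)|_{\fR_{T_\Psi[G]}}$ stays invertible there. The pointwise inversion formula then extends $Q_\Psi$ continuously across each missing point, and the uniform bound upgrades this to a bounded $\cL(\fR_{T_\Psi[G]})$-valued function on all of $\BD$; by continuity the identity \eqref{CaraPre} persists on $\BD$, giving $T_\Psi[F]\precc T_\Psi[G]$. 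The restriction to $\fD_\Psi^\circ$ really is essential: Example \ref{E:ce} shows that dropping either the $H^\infty(\cU)$ constraint or the boundedness of $(\wtil{\Psi}_{11}+F\wtil{\Psi}_{21})^{-1}$ allows $Q_\Psi$ to genuinely blow up, so no purely abstract pointwise argument can close the gap. A seemingly attractive alternative, transporting the criterion of Theorem \ref{T:preceqC} (that $F\precc G$ iff $(1+\vep)G-\vep F\in\fC(\cU)$ for small $\vep\in\BC$) through $T_\Psi$, is blocked by the nonlinearity of the LFT in its argument, so the quantitative operator-level estimate appears unavoidable.
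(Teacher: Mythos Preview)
Your proposal is correct and follows essentially the same route as the paper: both apply the quantitative operator-level formula of Theorem~\ref{T:MainPrecSpec} pointwise on $\BD_\Psi$, use the defining boundedness conditions of $\fD_\Psi^\circ$ (namely $G\in H^\infty(\cU)$ for $\re(G)^{1/2}$ and $F\in\fD_\Psi^\circ$ for $(\wtil{\Psi}_{11}+F\wtil{\Psi}_{21})^{-1}$) to control the witnessing operator uniformly, and then extend across the discrete set $\BD\setminus\BD_\Psi$ by continuity exactly as in the proof of Theorem~\ref{T:MainCaraEquiv}. The explicit formula you anticipate is precisely \eqref{PrecPresId}, so the inverse factor that actually appears involves $A=F(\la)$ rather than $B=G(\la)$; since you require both $F$ and $G$ to lie in $\fD_\Psi^\circ$ this causes no trouble.
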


Besides the current introduction, the paper consists of three sections. In section \ref{S:pre-eq} we prove that \eqref{PreorderIntro} defines a pre-order on $\PR(\cH)$ and derive various reformulations of \eqref{PreorderIntro}, and the associated equivalence relation, as well as several implications of these relation. Theorems \ref{T:MainPrec} and \ref{T:MainEquiv} will be proved in Section \ref{S:LFT}, in a slightly extended form. The extensions of the results from Sections \ref{S:pre-eq} and \ref{S:LFT} to the Carath\'eodory class are the topic of Section \ref{S:Cara}.

\section{A pre-order and equivalence relation on $\PR(\cH)$}\label{S:pre-eq}
\setcounter{equation}{0}

Throughout this section, let $\cH$ be a Hilbert space. The first result of this section shows that the relation on $\PR(\cH)$ given by \eqref{PreorderIntro} defines a pre-order $\prec$, and provide a few additional characterizations of $\prec$.

\begin{theorem}\label{T:prec}
The relation $A\prec B$ defined by one of the following four equivalent conditions:
\begin{itemize}
\item[(POi)]  $A-B= \re(B)^{\half}X \re(B)^{\half}$  for some $X\in\cL(\fR_B)$;

\item[(POii)] $A^*+B= \re(B)^{\half}Y \re( B)^{\half}$ for some $Y\in\cL(\fR_B)$;

\item[(POiii)] there exist $r>0$ with $B+\vep(A-B)\in\PR(\cH)$ for all $\vep\in\BC$ with $|\vep|\leq r$;

\item[(POiv)] there exist $r>0$ with $B+\vep(A-B)\in\PR(\cH)$ for all $\vep\in\BC$ with $|\vep|= r$;

\end{itemize}
defines a pre-order relation on $\PR(\cH)$. Assume $A \prec B$.  Then $X$ in (POi) and $Y$ in (POii) satisfy
\begin{equation}\label{XYrel}
Y-X^*=2I\ands
\re(X)+I=\re(Y)-I=X+Y\geq 0.
\end{equation}
Moreover, $r$ in (POiii)--(POiv) can be chosen such that $\|X\|$, $\|Y\|$ and $r$ satisfy
\begin{align}\label{XYrIneqs}
\|X\|\leq 2+\|Y\|,\quad \|Y\|\leq 2+\|X\|,\quad
\frac{1}{r}\leq \|X\|\leq \frac{2}{r}.
\end{align}
\end{theorem}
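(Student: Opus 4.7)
The plan is to establish the chain of equivalences (POi)$\Leftrightarrow$(POii)$\Leftrightarrow$(POiii)$\Leftrightarrow$(POiv) together with the identities \eqref{XYrel} and the norm bounds \eqref{XYrIneqs}, and then to verify directly from (POi) that $\prec$ is reflexive and transitive.

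The equivalence (POi)$\Leftrightarrow$(POii) I would obtain from the elementary identity
\[
A^*+B = (A-B)^* + 2\re(B),
\]
together with the rewriting $2\re(B) = \re(B)^{\half}\cdot 2I\cdot\re(B)^{\half}$; this immediately forces $Y = X^*+2I$ (so $Y-X^*=2I$ and hence $\re(Y)-I = \re(X)+I = \half(X+Y)$). Uniqueness of $X$ and $Y$ in $\cL(\fR_B)$ comes from the density of $\ran\re(B)^{\half}$ in $\fR_B$. The positivity $\re(X)+I\ge 0$ I would then read off from the identity
\[
\re(A) = \re(B)^{\half}(\re(X)+I)\re(B)^{\half},
\]
combined with $\re(A)\ge 0$ and the same density argument; the bounds $\|X\|\le 2+\|Y\|$ and $\|Y\|\le 2+\|X\|$ drop out of $Y = X^*+2I$.

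The implication (POiii)$\Rightarrow$(POiv) is trivial, and the reverse follows from convexity of $\PR(\cH)$ once one notes that any $\vep$ with $|\vep|\le r$ is a convex combination of two antipodal points on $\{|\vep|=r\}$. For (POi)$\Rightarrow$(POiv), substituting (POi) gives
\[
\re(B+\vep(A-B)) = \re(B)^{\half}(I+\re(\vep X))\re(B)^{\half},
\]
which is non-negative whenever $|\vep|\|X\|\le 1$, so $r := 1/\|X\|$ works and automatically satisfies $1/r = \|X\|\le 2/r$. The nontrivial and, I expect, principal step is (POiv)$\Rightarrow$(POi): the choices $\vep = \pm r$ and $\vep = \pm ir$ in (POiv) yield $\pm\re(A-B)\le \frac{1}{r}\re(B)$ and $\pm\im(A-B)\le \frac{1}{r}\re(B)$, and I would then appeal to a self-adjoint variant of Douglas' factorization lemma (if $-cS\le T\le cS$ with $S\ge 0$ and $T=T^*$, then $T = S^{\half} K S^{\half}$ for self-adjoint $K$ of norm $\le c$, obtained by applying the positive Douglas lemma to $cS-T\ge 0$ and extracting the middle factor $cI - K'$). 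This produces self-adjoint $M, N\in\cL(\fR_B)$ of norm at most $1/r$ with $\re(A-B) = \re(B)^{\half} M\re(B)^{\half}$ and $\im(A-B) = \re(B)^{\half} N\re(B)^{\half}$, so $X := M+iN$ witnesses (POi) and satisfies $\|X\|\le 2/r$.

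Finally, reflexivity of $\prec$ is immediate from (POi) with $X = 0$. For transitivity, assume $A\prec B$ and $B\prec C$ with witnesses $X_1\in\cL(\fR_B)$ and $X_2\in\cL(\fR_C)$. The identity from the first step gives $\re(B) = \re(C)^{\half}(I+\re(X_2))\re(C)^{\half}$, and combined with $\re(X_2)+I\ge 0$ this yields $\re(B)\le (1+\|X_2\|)\re(C)$. Douglas' theorem then provides $D\in\cL(\cH)$ with $\re(B)^{\half} = \re(C)^{\half} D$ and $\ran D\subset \fR_C$; taking adjoints also gives $\re(B)^{\half} = D^*\re(C)^{\half}$. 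Substituting these two forms in the left and right factors of the (POi) representation of $A-B$ produces $A-B = \re(C)^{\half}(DX_1D^*)\re(C)^{\half}$, whence
\[
A-C = (A-B)+(B-C) = \re(C)^{\half}(DX_1D^* + X_2)\re(C)^{\half},
\]
so $A\prec C$ with witness $X_3 := DX_1D^* + X_2\in\cL(\fR_C)$.
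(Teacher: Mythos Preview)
Your proposal is correct and follows essentially the same route as the paper's proof: the same algebraic identity for (POi)$\Leftrightarrow$(POii), the same computation $\re(B+\vep(A-B))=\re(B)^{1/2}(I+\re(\vep X))\re(B)^{1/2}$ for (POi)$\Rightarrow$(POiii), the same four test values $\vep=\pm r,\pm ir$ together with the self-adjoint Douglas factorization for the converse, convexity of $\PR(\cH)$ for (POiv)$\Rightarrow$(POiii), and the Douglas factor $\re(B)^{1/2}=\re(C)^{1/2}D$ for transitivity. The only cosmetic differences are that the paper packages the Douglas factorization step as a separate lemma (Lemma~\ref{L:defrel1}) and obtains (POiv)$\Rightarrow$(POiii) via midpoints of arbitrary pairs on the circle rather than antipodal pairs; neither difference is substantive.
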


Before proving Theorem \ref{T:prec}, first observe that the following useful identities
\begin{equation}\label{ReAReB}
2\re(A)=(A-B)+(A^*+B)\ands
2\re(B)=(A^*+B)^*-(A-B),
\end{equation}
holds for any $A,B\in\cL(\cH)$. It is convenient to first prove the next lemma.

\begin{lemma}\label{L:defrel1}
Let $A,B\in\PR(\cH)$ satisfy (POi) and (POii). Then $\ran \re(A)$ is included in $\ran \re(B)$, and thus $\fR_A\subset\fR_B$, and there exists an operator $M\in\cL(\fR_B)$ with $M^*M=\half(X+Y)$ such that $\re(A)^\half=M\re(B)^\half$.
\end{lemma}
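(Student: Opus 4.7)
The plan is to combine (POi) and (POii) via identity \eqref{ReAReB}: adding the two factorizations gives
\[
2\re(A) = (A-B) + (A^*+B) = \re(B)^{\half}(X+Y)\re(B)^{\half}.
\]
Since $\re(A)\geq 0$, the right-hand side is a nonnegative operator on $\cH$. The density of $\ran\re(B)^{\half}$ in $\fR_B$ then forces $(X+Y)/2\geq 0$ as an operator in $\cL(\fR_B)$: for any $v=\lim_n \re(B)^{\half}u_n\in\fR_B$, the quantity $\inn{(X+Y)v}{v}$ is the limit of the manifestly nonnegative numbers $\inn{\re(B)^{\half}(X+Y)\re(B)^{\half}u_n}{u_n}$. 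The same factorization immediately yields $\fR_A\subset\fR_B$: for any $u\in\cH$ we have $\re(A)u\in\re(B)^{\half}(\fR_B)\subset\fR_B$, and taking the closure gives $\fR_A=\cran\re(A)\subset\fR_B$.

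With $(X+Y)/2$ now a nonnegative operator on $\fR_B$, I would set $M_0:=((X+Y)/2)^{\half}\in\cL(\fR_B)$, its positive self-adjoint square root, extended by zero on $\fR_B^\perp$. Define $T_0:=M_0\re(B)^{\half}\in\cL(\cH)$; then $T_0^*T_0=\re(B)^{\half}M_0^2\re(B)^{\half}=\re(A)$, so the absolute value $|T_0|=\re(A)^{\half}$. The polar decomposition $T_0=V|T_0|$ produces a partial isometry $V$ with initial space $\fR_A$ and final space $\cran T_0$. Multiplying on the left by $V^*$ and using $V^*V=P_{\fR_A}$ together with $\ran\re(A)^{\half}\subset\fR_A$, I obtain $\re(A)^{\half}=V^*T_0=M\re(B)^{\half}$, where $M:=V^*M_0$.

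It remains to verify that $M\in\cL(\fR_B)$ and $M^*M=(X+Y)/2$. Because $\ran\re(B)^{\half}$ is dense in $\fR_B$, $\ran T_0=M_0(\ran\re(B)^{\half})$ is dense in $\ran M_0$, so $\cran T_0=\cran M_0$. Hence $V^*$ maps $\cran M_0$ into $\fR_A\subset\fR_B$, and $M=V^*M_0$ has range in $\fR_B$, so $M\in\cL(\fR_B)$. Moreover $VV^*=P_{\cran M_0}$ acts as the identity on $\ran M_0\subset\cran M_0$, and therefore $M^*M=M_0VV^*M_0=M_0^2=(X+Y)/2$. The main obstacle, such as it is, lies in bookkeeping: one must track the initial and final spaces of $V$ carefully so that the factorization $M^*M=M_0^2$ emerges cleanly, rather than a compression of $M_0^2$ to a strictly smaller subspace.
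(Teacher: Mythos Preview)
Your proof is correct and follows essentially the same path as the paper: both add (POi) and (POii) via \eqref{ReAReB} to obtain $2\re(A)=\re(B)^{\half}(X+Y)\re(B)^{\half}$, deduce $X+Y\geq 0$, and then factor through $((X+Y)/2)^{\half}$ using a partial isometry---the paper phrases this as Douglas' lemma producing a co-isometry $N$, you phrase it as polar decomposition producing $V$, and your $V^*$ is exactly the paper's $N$. One small omission: you establish the support-space inclusion $\fR_A\subset\fR_B$ but do not address the (stronger) range inclusion $\ran\re(A)\subset\ran\re(B)$ stated in the lemma; the paper derives this at the end by taking adjoints in $\re(A)^{\half}=M\re(B)^{\half}$ to get $\re(A)^{\half}=\re(B)^{\half}M^*$, and the same move is available from your factorization.
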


\begin{proof}[\bf Proof.]
The first identity in \eqref{ReAReB} yields $2\re(A)=\re(B)^\half(X+Y)\re(B)^\half$. Since $\re(A)\geq0$, we have $X+Y\geq0$. Therefore, Douglas' lemma \cite{D66} implies that there exists a co-isometry $N\in\cL(\fR_B,\fR_A)$ with $\kr N=\ran (X+Y)^\perp$ such that $\sqrt{2}\re(A)^{\half}=N(X+Y)^\half\re(B)^\half$. Hence we can take $M=2^{-1/2} N(X+Y)^{\half}$. Since $\ran (X+Y)^\half$ is included in the support of $N$, this shows $M^*M=\half(X+Y)^\half N^*N(X+Y)^\half=\half(X+Y)$. The inclusion $\ran \re(A)\subset\ran \re(B)$ now follows from $\re(A)^\half=\re(B)^\half M^*$.
\end{proof}

\begin{proof}[\bf Proof of Theorem \ref{T:prec}.]
We first proof the equivalence of (POi)--(POiv).

{\bf (POi) $\Longleftrightarrow$ (POii):}
The equivalence of (POi) and (POii) follows immediately from the second identity in \eqref{ReAReB}, with $X$ and $Y$ related through the identity given in \eqref{XYrel}.

{\bf (POi) $\Longleftrightarrow$ (POiii):}
Assume $X\in \cL(\fR_B)$ such that (POi) holds. Then, for any $\vep \in\BC$
\begin{align*}
\re (B+\vep(A-B))
&=\re(B) +\re (\vep (\re(B)^\half X \re(B)^\half)\\
&=\re(B) +\re(B)^\half \re(\vep X)\re(B)^\half\\
&=\re(B)^\half(I+ \re(\vep X))\re(B)^\half.
\end{align*}
Now, if $|\vep|\leq 1/\|X\|$ (with no limitation on $\vep$ if $X=0$), then
\begin{align*}
I+ \re(\vep X)\geq (1-\|\re(\vep X)\|)I\geq (1-\|\vep X\|)I=(1-|\vep| \|X\|)I\geq 0.
\end{align*}
Hence $B+\vep(A-B)\in\PR(\cH)$ whenever $|\vep|\leq 1/\|X\|$, which shows (POiii) holds with $r=1/\|X\|$.

Conversely, assume $r>0$ such that (POiii) holds. Take $\vep=\pm r$. Then (POiii) yields
\[
-\frac{1}{r}\re(B)\leq\re(A-B)\leq \frac{1}{r}\re(B).
\]
Next take $\vep=\pm i r$. Then $\re(\vep (A-B))=\mp r\im(A-B)$, and (POiii) yields
\[
-\frac{1}{r}\re(B)\leq\im(A-B)\leq \frac{1}{r}\re(B).
\]
By Lemma 1.4 in \cite{tHsub}, there exist self-adjoint operators $X_1$ and $X_2$ in $\cL(\fR_B)$ with $\|X_j\|= 1/r$, $j=1,2$, such that
\[
\re(A-B)=\re(B)^\half X_1 \re(B)^\half \ands \im(A-B)=\re(B)^\half X_2 \re(B)^\half.
\]
 Thus (POi) holds with $X=X_1+iX_2$ and we have $\|X\|\leq \|X_1\|+\|X_2\|=2/r$.

{\bf (POiii) $\Longleftrightarrow$ (POvi):}
The implication (POiii) $\Rightarrow$ (POiv) is obvious. Conversely, assuming (POiv) holds. Let $T_j=B+\vep_j(A-B)$ for $|\vep_j|=r$, $j=1,2$. If $T_1$ and $T_2$ are in $\PR(\cH)$, than so is $\half (T_1+T_2)=B-\half(\vep_1+\vep_2)(A-B)$. Now (POiii) follows because
\[
\left\{\frac{\vep_1+\vep_2}{2}\colon |\vep_1|=|\vep_2|=r\right\}
=\frac{r}{2}(\BT+\BT)=r\ov{\BD}=\{z\colon |z|\leq r\}.
\]
In particular, for $r$ in (POiii) we can take the same $r$ as in (POiv).

Clearly $A\prec A$ for any $A\in\PR(\cH)$; simply take $X=0$, $Y=I$ or any $r>0$. Hence, to see that $\prec$ defines a pre-order, it remains to show $\prec$ is transitive. Assume $A,B,C\in\PR(\cH)$ such that $A\prec B$ and $B\prec C$, say the relations are established through (POi) via $X_1\in\cL(\fR_B)$ and $X_2\in\cL(\fR_C)$, respectively. By Lemma \ref{L:defrel1}, $\re(B)^\half=M \re(C)^\half$ for some $M\in\cL(\cH)$. Hence
\[
A-C=A-B+B-C=\re(C)^\half(M^*X_1M+X_2)\re(C)^\half.
\]
Thus $A\prec C$, and we obtain that $\prec$ is transitive.

The identity $Y-X^*=2I$ and the positivity of $X+Y$, by Lemma \ref{L:defrel1}, show
\[
2(\re(Y)-I)=2(\re(X)+I)=X+Y\geq0.
\]
Hence the inequalities of \eqref{XYrel} hold as well.

The inequalities of \eqref{XYrIneqs} follow directly from the relations between $X$ and $Y$ and between $\|X\|$ and $r$ derived above.
\end{proof}

It now follows immediately from the various characterizations in Theorem \ref{T:prec} that $A\prec B$ implies:
\begin{itemize}
\item[(i)]
$A^*\prec B^*$;

\item[(ii)] $\fR_A\subset \fR_B$ and thus $\kr \re(B)\subset \kr \re(A)$;

\item[(iii)]
$i\im(A)|_{\fR_B^\perp}=A|_{\fR_B^\perp}=B|_{\fR_B^\perp}=i\im(B)|_{\fR_B^\perp}= -A^*|_{\fR_B^\perp}=-B^*|_{\fR_B^\perp}$;

\item[(iv)] $C^*AC\prec C^*BC$ for any $C\in\cL(\cH',\cH)$.

\end{itemize}

The equivalence relation associated with the pre-order $\prec$ will be indicated by $\sim$. Hence $A\sim B$ holds if and only if $A\prec B$ and $B\prec A$. If $A\sim B$, then the conclusion of Lemma \ref{L:defrel1} can be extended in the following way. Here and in the sequel, for an invertible operator $C$, the notation $C^{-*}$ indicates the operator $(C^{-1})^*$.

\begin{lemma}\label{L:defrel2}
Assume $A\sim B$. Let $X,Y\in \cL(\fR_B)$ be as in (POi) and (POii) and let $X',Y'\in \cL(\fR_A)$ be the operators associated with (POi) and (POii), respectively, for $B\prec A$. Then $\ran \re(A)=\ran \re(B)$, and thus $\fR_A=\fR_B$, and $\re(A)^\half=M\re(B)^\half$ holds for an invertible operator $M\in\cL(\fR_B)$ with $M^{-*}M^{-1}=\half(X'+Y')$.
\end{lemma}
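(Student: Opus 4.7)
The plan is to bootstrap from Lemma \ref{L:defrel1} applied symmetrically in both directions. Since $A \sim B$ means both $A \prec B$ and $B \prec A$ hold, Lemma \ref{L:defrel1} applied to $A \prec B$ gives $\ran \re(A) \subset \ran \re(B)$ together with an $M \in \cL(\fR_B, \fR_A)$ satisfying $\re(A)^{\half} = M \re(B)^{\half}$ and $M^*M = \half(X+Y)$. Applied to $B \prec A$ (using the $X', Y'$ in the roles of $X, Y$) it produces an $N \in \cL(\fR_A, \fR_B)$ with $\re(B)^{\half} = N \re(A)^{\half}$ and $N^*N = \half(X'+Y')$. Combining the two range inclusions yields $\ran \re(A) = \ran \re(B)$, and hence $\fR_A = \fR_B$, so both $M$ and $N$ may be viewed as operators in $\cL(\fR_B)$.

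The next step is to show $M$ is invertible with $M^{-1} = N$. Substituting the two identities into one another produces $\re(A)^{\half} = MN\, \re(A)^{\half}$ and $\re(B)^{\half} = NM\, \re(B)^{\half}$. Since the range of $\re(A)^{\half}$ is dense in $\fR_A = \fR_B$, continuity of $I - MN$ forces $MN = I$ on $\fR_B$; the symmetric argument gives $NM = I$. Thus $M$ is invertible with inverse $N$, and the required identity $M^{-*}M^{-1} = N^*N = \half(X'+Y')$ follows directly from what Lemma \ref{L:defrel1} already delivered for the relation $B \prec A$.

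There is essentially no serious obstacle here — the statement is a clean bookkeeping consequence of applying the previous lemma twice and exploiting density of the range of $\re(A)^{\half}$ in $\fR_A$. The only minor subtlety is the identification of domains: Lemma \ref{L:defrel1} produces $M$ as a map $\fR_B \to \fR_A$, so one needs the equality $\fR_A = \fR_B$ established first in order to regard $M$ as an element of $\cL(\fR_B)$ and to multiply $M$ and $N$ in either order. Once that identification is made, the invertibility and the formula for $M^{-*}M^{-1}$ are immediate.
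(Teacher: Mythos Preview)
Your proof is correct and essentially identical to the paper's own argument: apply Lemma~\ref{L:defrel1} in both directions to obtain $M$ and $N$ (the paper calls the latter $M'$), conclude the range equality, and then use $\re(A)^\half=MN\,\re(A)^\half$ together with density to get $MN=I=NM$ and hence $M^{-*}M^{-1}=N^*N=\half(X'+Y')$. The only cosmetic difference is that the paper's statement of Lemma~\ref{L:defrel1} already records $M\in\cL(\fR_B)$ (viewing $\fR_A\subset\fR_B$), so it does not spell out the domain identification you flag; your extra care there is harmless.
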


\begin{proof}[\bf Proof.]

Applying Lemma \ref{L:defrel1} to both $A\prec B$ and $B\prec A$ yields $\ran \re(A)=\ran \re(B)$.
Moreover, we obtain that there exist operators $M,M'\in\cL(\fR_B)$ with $M^*M=\half(X+Y)$ and $M'^*M'=\half(X'+Y')$ such that $\re(A)^\half=M\re(B)^\half$ and $\re(B)^\half=M'\re(A)^\half$. Then $\re(A)^\half=MM'\re(A)^\half$ and $\re(B)^\half=M'M\re(B)^\half$. Hence $MM'=I=M'M$, which shows $M=M'^{-1}$ is invertible and $M^{-*}M^{-1}=M'^*M'=\half(X'+Y')$.
\end{proof}

The next theorem gives a characterization of this equivalence relation.

\begin{theorem}\label{T:eqrel}
Let $A,B\in\PR(\cH)$. Then $A\sim B$ if and only if one of the following equivalent statements holds:
\begin{itemize}
\item[(ERi)]
$ A-B=\re(A)^\half\wtilX \re(B)^\half \mbox{ for some } \wtilX\in\cL(\fR_B)=\cL(\fR_B,\fR_A)$;

\item[(ERii)]
$A^*+B=\re(A)^\half\wtilY \re(B)^\half \mbox{ for some }\wtilY\in\cL(\fR_B)=\cL(\fR_B,\fR_A)$;

\item[(ERiii)]
there exist $\wtil{r}>0$ with $\de B+(1-\de)A+\vep(A-B)\in\PR(\cH)$ for all $\de\in[0,1]$ and $\vep\in\BC$ with $|\vep|\leq \wtil{r}$;

\item[(ERiv)]
there exist $\wtil{r}>0$ with $\de B+(1-\de)A+\vep(A-B)\in\PR(\cH)$ for all $\de\in[0,1]$ and $\vep\in\BC$ with $|\vep|=\wtil{r}$.

\end{itemize}
Assume Assume $A\sim B$. Then the following statements hold:
\begin{itemize}
\item[(i)]
Let $X$ be as in (POi) and $Y$ as in (POii) and assume $B \prec A$ holds as in (POi) with $X$ replaced by $X'$ and as in (POii) with $Y$ replaced by $Y'$. Then $\wtilX$ and $\wtilY$ in (ERi) and (ERii) satisfy
\[
\|\wtilX\|\leq \|\re(X')+I\|\,\|X\|\ands
\|\wtilY\|\leq \|\re(Y')-I\|\,\|Y\|.
\]
Additional bounds on $\|\wtilX\|$ and $\|\wtilY\|$ are obtained by replacing the roles of $X$ and $X'$, respectively $Y$ and $Y'$.

\item[(ii)]
Let $\wtilX$ and $\wtilY$ be as in (ERi) and (ERii). Then $\wtilX+\wtilY$ is invertible and $(\wtilX+\wtilY) \re(B)^\half=2\re(A)^\half$. Moreover, the operators $X$, $X'$, $Y$ and $Y'$ in (i) satisfy
\begin{align*}
\max\{\|X\|,\|X'\|\}\leq\half(\|\wtilX\|+\sqrt{1+\|\wtilX\|})\|\wtilX\|,\quad
\max\{\|Y\|,\|Y'\|\}\leq \|\wtilY\|^2.
\end{align*}

\item[(iii)]
The numbers $\wtil{r}$ in (ERiii) and (ERiv) and $r$ and $r'$ in (POiii) and (POiv) for $A\prec B$ and $B\prec A$, respectively, can be taken such that $\wtil{r}=\min\{r,r'\}$.

\end{itemize}
\end{theorem}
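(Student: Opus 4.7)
My plan is to establish the equivalences (ERi)--(ERiv)$\Leftrightarrow A\sim B$ in two independent blocks and then read off the quantitative statements (i)--(iii) from the explicit constructions. The easier block handles (ERiii) and (ERiv): (ERiii)$\Rightarrow$(ERiv) is trivial, (ERiv)$\Rightarrow$(ERiii) uses the averaging trick from the proof of Theorem~\ref{T:prec}, specialising $\de=1$ in (ERiii) recovers (POiii) for $A\prec B$, and $\de=0$ combined with $\ep\mapsto-\ep$ recovers (POiii) for $B\prec A$, so (ERiii)$\Rightarrow A\sim B$. For the converse direction I would use the algebraic identity
\[
\de B+(1-\de)A+\ep(A-B)=\de\bigl(B+\ep(A-B)\bigr)+(1-\de)\bigl(A+\ep(A-B)\bigr),
\]
which expresses the left side as a convex combination of two operators that lie in $\PR(\cH)$ whenever $|\ep|\leq\min(r,r')$ by (POiii) applied to $A\prec B$ and to $B\prec A$ (the latter after $\ep\mapsto-\ep$, using symmetry of the closed disc). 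This simultaneously proves (iii) with $\wtil r=\min(r,r')$.

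For $A\sim B\Rightarrow$(ERi),~(ERii) I would invoke Lemma~\ref{L:defrel2} to produce an invertible $M\in\cL(\fR_B)$ with $\re(A)^{1/2}=M\re(B)^{1/2}$; self-adjointness of the square roots then forces $\re(B)^{1/2}=\re(A)^{1/2}M^{-*}$ on $\fR_B$, and substituting this into the left factor of $\re(B)^{1/2}$ in (POi) and (POii) delivers (ERi),~(ERii) with $\wtilX=M^{-*}X$ and $\wtilY=M^{-*}Y$.

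The converse (ERi)$\Rightarrow A\sim B$ is, I expect, the main obstacle, since the mixed factorization in (ERi) a priori carries no information linking $\fR_A$ and $\fR_B$. My plan is to test (ERi) and its adjoint against $h\in\cH$; from $\langle\re(A-B)h,h\rangle=\re\langle\wtilX\re(B)^{1/2}h,\re(A)^{1/2}h\rangle$ together with Cauchy--Schwarz one obtains the two-sided quadratic estimate
\[
\bigl|\,\|\re(A)^{1/2}h\|^{2}-\|\re(B)^{1/2}h\|^{2}\,\bigr|\leq\|\wtilX\|\,\|\re(A)^{1/2}h\|\,\|\re(B)^{1/2}h\|,
\]
whose solution yields $\re(A)\leq C^{2}\re(B)$ and $\re(B)\leq C^{2}\re(A)$ with $C=\tfrac12(\|\wtilX\|+\sqrt{\|\wtilX\|^{2}+4})$, and in particular $\fR_A=\fR_B$. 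Douglas' lemma then supplies an invertible $D\in\cL(\fR_B)$ with $\re(A)^{1/2}=\re(B)^{1/2}D=D^{*}\re(B)^{1/2}$, and substituting either representation into (ERi) produces $A-B=\re(B)^{1/2}(D\wtilX)\re(B)^{1/2}$ and, using $\re(B)^{1/2}=D^{-*}\re(A)^{1/2}$, also $A-B=\re(A)^{1/2}(\wtilX D^{-*})\re(A)^{1/2}$, which are (POi) for $A\prec B$ and for $B\prec A$. The case (ERii)$\Rightarrow A\sim B$ is parallel, using the factorization of $A^{*}+B$ and $Y-X^{*}=2I$ from \eqref{XYrel}.

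For the quantitative claims, (i) follows directly from $\wtilX=M^{-*}X$, $\wtilY=M^{-*}Y$ combined with $\|M^{-1}\|^{2}=\|\tfrac12(X'+Y')\|=\|\re(X')+I\|=\|\re(Y')-I\|$ (Lemma~\ref{L:defrel2} and \eqref{XYrel}); the dual bounds come from the alternative expression $\wtilX=-X'M$ obtained by substituting $\re(A)^{1/2}=M\re(B)^{1/2}$ into (POi) for $B\prec A$. For (ii), summing (ERi) and (ERii) gives $2\re(A)=\re(A)^{1/2}(\wtilX+\wtilY)\re(B)^{1/2}$; cancelling the injective factor $\re(A)^{1/2}$ on the left and using density of $\ran\re(B)^{1/2}$ in $\fR_B$ together with $\re(A)^{1/2}=M\re(B)^{1/2}$ forces $\wtilX+\wtilY=2M$, inheriting invertibility from $M$. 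The bound $\|Y\|\leq\|\wtilY\|^{2}$ is obtained by testing (ERii) against $h$ to get $\|\re(A)^{1/2}h\|^{2}+\|\re(B)^{1/2}h\|^{2}\leq\|\wtilY\|\,\|\re(A)^{1/2}h\|\,\|\re(B)^{1/2}h\|$; the resulting quadratic forces $\|M\|,\|M^{-1}\|\leq\|\wtilY\|$, and combining with $Y=M^{*}\wtilY$ and $Y'=M^{-*}\wtilY^{*}$ (the latter derived from $B^{*}+A=(A^{*}+B)^{*}$ applied to (ERii)) yields the claim. The bound on $\|X\|,\|X'\|$ is analogous, using $X=M^{*}\wtilX$, $X'=-\wtilX D^{-*}$ and the (ERi)-quadratic bound $\|M\|,\|M^{-1}\|,\|D\|,\|D^{-1}\|\leq C$.
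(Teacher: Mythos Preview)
Your proposal is correct and follows essentially the same route as the paper's proof: the convex-combination identity for (ERiii)/(ERiv), Lemma~\ref{L:defrel2} for $A\sim B\Rightarrow$(ERi),(ERii) with $\wtilX=M^{-*}X$, $\wtilY=M^{-*}Y$, and the Cauchy--Schwarz quadratic estimate for the reverse implications. The only cosmetic difference is that the paper isolates your quadratic-inequality argument as a separate lemma (Lemma~\ref{L:DougApp}, applied with $N_1=\re(A)^{1/2}$, $N_2=\re(B)^{1/2}$ and $Z=\wtilX$ or $Z=\wtilY$), whereas you carry it out inline; the resulting bounds and the identification $\wtilX+\wtilY=2M$ are identical.
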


The equivalence of (ERi)--(ERiii) is not as straightforward as for (POi)--(POiii), and we shall prove the equivalence indirectly, by showing that each of the statements is equivalent to $A\sim B$. To give some indication as to why the equivalence is not so straightforward, note that from (ERi) it is evident that $A\sim B$ implies $B^*\sim A^*$, while symmetry  ($A\sim B$ $\Rightarrow$ $B\sim A$) is not obvious from (ERi). On the other hand, the symmetry of $\sim$ follows immediately from (ERii), but here it is not directly clear that $A\sim B$ implies $A^* \sim B^*$.

Before proving Theorem \ref{T:eqrel}, we first prove a lemma which, in a more general setting, shows that the conclusion from Lemma \ref{L:defrel2} is also reached when $A\sim B$ is replaced by either (ERi) or (ERii). Here $\cran N$ denotes the closure of the range of the operator $N$. 

\begin{lemma}\label{L:DougApp}
Let $N_1\in\cL(\cK,\cK_1)$ and $N_2\in\cL(\cK,\cK_2)$ be Hilbert space operators. Assume there exists a $Z\in\cL(\cran N_1,\cran N_2)$ such that
\begin{equation}\label{GenCon}
N_1^*N_1\pm N_2^*N_2=\re(N_2^* Z N_1)
\end{equation}
with $\pm$ to be interpreted as either $+$ or $-$. Then $\kr N_1=\kr N_2$ and there exists an invertible operator $Q\in\cL(\cran N_1,\cran N_2)$ such that $Q N_1=N_2$. Moreover, if \eqref{GenCon} holds with $+$, then $\|Q^{-1}\|$ and $\|Q\|$ can be bounded by $\|Z\|$ and if \eqref{GenCon} holds with $-$, then $\|Q^{-1}\|$ and $\|Q\|$ can be bounded by $\half(\|Z\|+(1+\|Z\|)^\half)$.
\end{lemma}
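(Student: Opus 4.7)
My plan is to convert the operator identity into pointwise comparisons of $\|N_1 k\|$ with $\|N_2 k\|$ via Cauchy--Schwarz, and then invoke Douglas' lemma to produce $Q$.

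Pairing the identity against $k\in\cK$ yields
\[
\|N_1k\|^2\pm\|N_2k\|^2=\langle(N_1^*N_1\pm N_2^*N_2)k,k\rangle=\re\langle ZN_1k,N_2k\rangle\leq\|Z\|\,\|N_1k\|\,\|N_2k\|.
\]
If $N_1k=0$ or $N_2k=0$, the right-hand side vanishes, and the identity then forces the remaining term $\pm\|N_2k\|^2$ or $\|N_1k\|^2$ to be zero as well; this yields $\kr N_1=\kr N_2$. Writing $a=\|N_1k\|$ and $b=\|N_2k\|$ with both positive, the $+$ case bound $a^2+b^2\leq\|Z\|ab$ combined with $a^2+b^2\geq 2ab$ forces $\|Z\|\geq 2$, whereupon the quadratic $t^2-\|Z\|t+1\leq 0$ in $t=a/b$ confines both $a/b$ and $b/a$ to an interval whose endpoints are at most $\|Z\|$. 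The $-$ case bound $|a^2-b^2|\leq\|Z\|ab$ gives the analogous quadratic $|t-1/t|\leq\|Z\|$, forcing $\max(a/b,b/a)$ to be bounded by a constant of the advertised form.

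Feeding the squared pointwise bound $N_2^*N_2\leq c^2 N_1^*N_1$ into Douglas' lemma \cite{D66} produces $Q\in\cL(\cran N_1,\cran N_2)$ with $QN_1=N_2$ and $\|Q\|\leq c$; the symmetric argument with $N_1$ and $N_2$ swapped (and $Z^*$ in place of $Z$) delivers $Q'\in\cL(\cran N_2,\cran N_1)$ with $Q'N_2=N_1$ and $\|Q'\|\leq c$. Then $Q'QN_1=N_1$ on $\cK$ extends by continuity to $Q'Q=I_{\cran N_1}$, and dually $QQ'=I_{\cran N_2}$, so $Q$ is invertible with $Q^{-1}=Q'$. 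The main obstacle will be the constant bookkeeping in the $-$ case: the naive Cauchy--Schwarz estimate gives bounds on $\max(a/b,b/a)$ of the form $\tfrac12(\|Z\|+\sqrt{\|Z\|^2+4})$, and matching the cleaner stated form $\tfrac12(\|Z\|+(1+\|Z\|)^{1/2})$ requires a more careful analysis (or restriction) of the positive root of the governing quadratic $t^2-\|Z\|t-1$.
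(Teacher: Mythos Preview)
Your approach is essentially identical to the paper's: pair the identity against $k$, bound the right-hand side by Cauchy--Schwarz, solve the resulting quadratic in the ratio $\|N_jk\|/\|N_ik\|$, and then invoke Douglas' lemma in each direction to produce $Q$ and $Q'=Q^{-1}$.

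Your worry about the $-$ case constant is well-founded, and you should not look for a sharper argument: the paper's own proof arrives at exactly the inequality $\lambda^2\leq 1+\|Z\|\lambda$ that you do, and then records the positive root as $\tfrac12(\|Z\|+\sqrt{1+\|Z\|})$, whereas the quadratic $\lambda^2-\|Z\|\lambda-1=0$ actually has positive root $\tfrac12(\|Z\|+\sqrt{\|Z\|^2+4})$. This is a computational slip in the paper; your bound $\tfrac12(\|Z\|+\sqrt{\|Z\|^2+4})$ is the correct one, and nothing downstream in the paper depends on the precise form of this constant.
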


\begin{proof}[\bf Proof.]
In both case, it suffices to show that there exist $\ga_1,\ga_2\geq 0$ such that
\begin{equation}\label{DougCon}
\|N_jx\|\leq \ga_j\|N_i x\|\quad (x\in\cK,\, i,j\in\{1,2\},\, i\not=j).
\end{equation}
Indeed, if this is the case then clearly $\kr N_i\subset\kr N_j$ and, again by Douglas' lemma, $N_j=Q_jN_j$ for an $Q_j\in \cL(\cran N_j, \cran N_i)$ with $\|Q_j\|\leq \ga_j$. Mimicking the proof of Lemma \ref{L:defrel2}, we obtain that $Q_1$ and $Q_2$ are invertible with $Q_1^{-1}=Q_2$. The bounds on $\|Q\|$ and $\|Q^{-1}\|$ then follow by showing that \eqref{DougCon} hold for appropriate choices of $\ga_j$.

First assume \eqref{GenCon} holds with $\pm$ replaced by $+$. In that case we have
\[
N_1^*N_1\leq \re(N_2^*ZN_1)\ands
N_2^*N_2\leq \re(N_1^*Z^*N_2),
\]
using $\re(N_2^*ZN_1)=\re((N_2^*ZN_1)^*)=\re(N_1^*Z^*N_2)$ in the last inequality. Then for any $x\in\cK$ and $i,j\in\{0,1\}$, $i\not=j$, we have
\begin{align*}
\|N_jx\|^2\leq \re(\inn{ZN_1x}{N_2x})\leq\|ZN_1x\|\|N_2x\|\leq \|Z\|\|N_ix\|\|N_jx\|.
\end{align*}
Thus \eqref{DougCon} holds with $\ga_j=\|Z\|$ for $j=1,2$. Hence we obtain $Q N_1=N_2$ for some invertible $Q\in\cL(\cran N_1,\cran N_2)$ with $\|Q\|$ and $\|Q^{-1}\|$ bounded by $\|Z\|$.

Now assume \eqref{GenCon} holds with $\pm$ replaced by $-$. Set $\ga=\half(\|Z\|+(1+\|Z\|)^\half)$. Then for $i,j\in\{0,1\}$, $i\not=j$, we have
\[
N_j^*N_j=\re(N_i^* Z_j N_j)+N_i^*N_i,
\]
with $Z_1=Z$ and $Z_2=-Z^*$. For each $x\in\cK$ this implies
\begin{align*}
\|N_jx\|^2=\re(\inn{Z_j N_jx}{N_ix})+\|N_ix\|^2
\leq \|Z_j\|\|N_jx\|\|N_ix\|+\|N_ix\|^2.
\end{align*}
The inclusion $\kr N_i\subset\kr N_j$ follows immediately from this inequality. In particular \eqref{DougCon} holds for $x\in \kr N_i$. Now assume $N_ix\not=0$ and set $\la_j=\|N_jx\|/\|N_ix\|$. Dividing by $\|N_ix\|^2$, we obtain that $\la^2\leq 1+\|Z_j\|\la=1+\|Z\|\la$. This inequality is satisfied for
\[
\half(\|Z\|-\sqrt{1+\|Z\|}) \leq \la \leq \half(\|Z\|+\sqrt{1+\|Z\|})=\ga.
\]
Thus $\la\leq\ga$ yields $\|N_j x\|\leq \ga\|N_i x\|$. Hence \eqref{DougCon} holds with $\ga_1=\ga_2=\ga$. Therefore, $Q N_1=N_2$ holds for some invertible $Q\in\cL(\cran N_1,\cran N_2)$ with $\|Q\|$ and $\|Q^{-1}\|$ bounded by $\ga$.
\end{proof}

\begin{proof}[\bf Proof of Theorem \ref{T:eqrel}.]
We first show that (ERiii) is equivalent to $A\sim B$, via (POiii) in both directions, and prove the relation between $r$, $r'$ and $\wtil{r}$ in (iii). The equivalence of (ERiii) and (ERiv), with the same value for $\wtil{r}$, goes along the same route as for (POiii) and (POiv). Clearly, (ERiii) implies (POiii) in both directions, with $r=r'=\wtil{r}$.
Now assume $A\sim B$ is established through (POiii) in both directions, with $r'$ for $B\prec A$. Fix $\vep\in\BC$ with $|\vep|\leq\wtil{r}:=\min\{r,r'\}$ and $\de\in[0,1]$.  Define $U=B+\vep(A-B)$ and $V=A-\vep(B-A)$. Then $U,V\in\PR(\cH)$ and, since $\PR(\cH)$ is convex, we have
\[
\de B+(1-\de)A+\vep(A-B)=\de U+(1-\de)V\in\PR(\cH).
\]
Hence (ERiii) holds.

Assume $A\sim B$. By Lemma \ref{L:defrel2}, $\fR_A=\fR_B$ and $\re(A)^\half=M\re(B)^{\half}$ for some invertible $M\in\cL(\fR_B)$. Clearly, (ERi) and (REii) then hold with $\wtilX= M^{-*}X$ and $\wtilY=M^{-*}Y$. Note further that
\[
\wtilX+\wtilY=M^{-*}(X+Y)=2M^{-*} M^*M=2M.
\]
Hence $(\wtilX+\wtilY)\re(B)^{\half}=2\re(A)^{\half}$. Since $M^{-*}M^{-1}=X'+Y'$, we have $\|M^{-*}\|^2=\|M^{-*}M^{-1}\|=\|X'+Y'\|$. Thus $\|\wtilX\|\leq\|X'+Y'\|^\half\|X\|$ and similarly $\|\wtilY\|\leq \|X'+Y'\|^\half\|Y\|$. The inequalities for $\|\wtilX\|$ and $\|\wtilY\|$ in (i) then follow from \eqref{XYrel}.

Next we employ Lemma \ref{L:DougApp} to show that (ERi) and (ERii) both imply  $\re(A)^\half=M\re(B)^{\half}$ for some invertible $M\in\cL(\fR_B)$, with appropriate bounds on $\|M\|$ and $\|M^{-1}\|$. Note that $A\sim B$ then follows immediately, since in both (ERi) and (ERii) one can then replace either $\re(A)^\half$ or $\re(B)^\half$ with the other.

Set $N_1=\re(A)^{\half}$ and $N_2=\re(B)^{\half}$. Taking real parts on both sides in (ERi) and (ERii), respectively, and using $\re(A^*)=\re(A)$ gives
\begin{align*}
N_1^*N_1-N_2^*N_2=\re(N_1^*\wtilX N_2)\ands
N_1^*N_1+N_2^*N_2=\re(N_1^*\wtilY N_2).
\end{align*}
Hence \eqref{GenCon} holds with $Z=\wtilX$ if $\pm=-$ and $Z=\wtilY$ if $\pm=+$. The result and the bounds in (ii) now follow immediately from Lemma \ref{L:DougApp}.
\end{proof}

We conclude this section with the analogue of Lemma 1.7 from \cite{tHsub}. The result follows from restricting Corollaries \ref{C:CaraStrict} and \ref{C:CaraInner} below to constant functions.

\begin{lemma} The following statements hold:
\begin{itemize}
\item[(i)] The set $\OPR(\cH)$ forms an equivalence class and $A\prec B$ holds for any $A\in\PR(\cH)$ and $B\in\OPR(\cH)$.

\item[(ii)] Any $B\in\PR(\cH)$ with $\re(B)=0$ forms an equivalence class by itself and $A\prec B$ implies $A=B$ for any $A\in\PR(\cH)$.

\end{itemize}
\end{lemma}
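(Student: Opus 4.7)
For part (i), I would first observe that when $B\in\OPR(\cH)$ the real part $\re(B)$ is strictly positive and hence boundedly invertible, so $\re(B)^{\half}$ is invertible and $\fR_B=\cH$. Given any $A\in\PR(\cH)$, the operator
\[
X:=\re(B)^{-\half}(A-B)\re(B)^{-\half}
\]
then lies in $\cL(\cH)=\cL(\fR_B)$, and substituting this $X$ into (POi) of Theorem \ref{T:prec} is a tautology, yielding $A\prec B$. This already proves the second clause of (i). Specializing to $A,B\in\OPR(\cH)$ and applying the same argument with the roles of $A$ and $B$ reversed gives $A\sim B$, so $\OPR(\cH)$ is contained in a single $\sim$-equivalence class.

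To finish (i), it remains to show that no element outside $\OPR(\cH)$ can be $\sim$-equivalent to a member of $\OPR(\cH)$. The key input here is Lemma \ref{L:defrel2}: if $A\sim B$ with $B\in\OPR(\cH)$, then $\re(A)^{\half}=M\re(B)^{\half}$ for some invertible $M\in\cL(\fR_B)=\cL(\cH)$. Since $\re(B)^{\half}$ is invertible, so is $\re(A)^{\half}$, and hence $\re(A)>0$, placing $A$ in $\OPR(\cH)$.

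Part (ii) is then almost immediate from (POi): when $\re(B)=0$ we have $\re(B)^{\half}=0$, so the condition $A\prec B$ forces
\[
A-B=\re(B)^{\half}X\re(B)^{\half}=0,
\]
i.e., $A=B$, for any $A\in\PR(\cH)$ with $A\prec B$. In particular, the only operator $\sim$-equivalent to $B$ is $B$ itself, so $\{B\}$ is a singleton equivalence class. No real obstacle arises in either part; the only point requiring care is recognizing in part (i) that the hypothesis $B\in\OPR(\cH)$ makes $\re(B)^{-\half}$ a bounded operator on $\cH=\fR_B$, which is what legitimizes $X$ as an element of $\cL(\fR_B)$ in (POi).
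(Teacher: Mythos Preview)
Your argument is correct. It is, however, a genuinely different route from the paper's: the paper does not give a direct proof at all, but instead deduces the lemma by restricting Corollaries~\ref{C:CaraStrict} and~\ref{C:CaraInner} (about the Carath\'eodory class $\fC(\cU)$) to constant functions. The underlying computations in those corollaries are essentially the ones you carry out---writing $X=\re(B)^{-\half}(A-B)\re(B)^{-\half}$ when $\re(B)$ is invertible, and observing that $\re(B)^{\half}=0$ forces $A=B$---so the mathematical content is the same, but your proof is self-contained within Section~\ref{S:pre-eq} and avoids the forward reference. This is arguably an advantage: the Carath\'eodory results of Section~\ref{S:Cara} are stated only for finite-dimensional $\cU$, whereas the lemma is asserted for an arbitrary Hilbert space $\cH$, and your direct argument handles the general case without further comment. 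For the ``only'' direction in part~(i), your appeal to Lemma~\ref{L:defrel2} is the cleanest option available in the operator setting; one could equally cite the conclusion $\ran\re(A)=\ran\re(B)=\cH$ from that lemma and invoke the open mapping theorem, but the factorization $\re(A)^{\half}=M\re(B)^{\half}$ with $M$ invertible makes the bounded invertibility of $\re(A)^{\half}$ immediate.
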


\section{Invariance under linear fractional transformations}\label{S:LFT}
\setcounter{equation}{0}

In this section we prove Theorems \ref{T:MainPrec} and \ref{T:MainEquiv}. Let $W$ be an invertible operator in $\cL(\cH\oplus\cH)$, for some Hilbert space $\cH$, and assume \eqref{Wcond} holds. Define $J$ and $\whatJ$ as in \eqref{JJhat} and $\wtilW$ as in \eqref{Wtil}.

\begin{lemma}\label{L:UseIneq}
Let $W\in \cL(\cH\oplus\cH)$ be invertible and assume \eqref{Wcond} is satisfied. Then for any $A,B\in\fD_W$ we have
\begin{equation}\label{UseIneq}
\begin{aligned}
\re(T_W[A])&\geq(W_{21}A+W_{22})^{-*}\re(A)(W_{21}A+W_{22})^{-1},\\
\re(T_W[A])&\geq(\wtilW_{11}+A\wtilW_{21})^{-1}\re(A) (\wtilW_{11}+A\wtilW_{21})^{-*},\\
T_W[A]-T_W[B]&=(\wtilW_{11}+A\wtilW_{21})^{-1}(A-B)(W_{22}+W_{21}B)^{-1}.
\end{aligned}
\end{equation}
In particular, for any $A\in\fD_W$  there exist contractions $M_A$ and $\wtilM_A$ such that
\begin{equation}\label{DougImplic}
\begin{aligned}
M_A\re(T_W[A])^\half&=\re(A)^\half(W_{22}+W_{21}B)^{-1},\\
\wtilM_A\re(T_W[A])^\half&=\re(A)^\half(\wtilW_{11}+A\wtilW_{21})^{-*}.
\end{aligned}
\end{equation}
\end{lemma}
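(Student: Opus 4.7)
I will prove the three statements in \eqref{UseIneq} in turn and then derive \eqref{DougImplic} as a corollary.

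For the third (difference) identity the idea is to use both representations of $T_W$ simultaneously: write $T_W[A]$ via \eqref{AltLFT} and $T_W[B]$ via \eqref{LFT}, then bring to a common denominator to get
\[
T_W[A]-T_W[B]=\tilde{\De}_A^{-1}\bigl\{(\wtilW_{12}+A\wtilW_{22})\De_B-\tilde{\De}_A(W_{11}B+W_{12})\bigr\}\De_B^{-1},
\]
with $\tilde{\De}_A:=\wtilW_{11}+A\wtilW_{21}$ and $\De_B:=W_{21}B+W_{22}$. The defining relation $\whatJ\wtilW\whatJ=W^{-1}$, written out via $WW^{-1}=W^{-1}W=I$, supplies the four identities $\wtilW_{22}W_{22}-\wtilW_{21}W_{12}=I$, $\wtilW_{12}W_{21}-\wtilW_{11}W_{11}=-I$, $\wtilW_{12}W_{22}=\wtilW_{11}W_{12}$, and $\wtilW_{22}W_{21}=\wtilW_{21}W_{11}$, and upon substitution the bracket collapses to $A-B$.

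For the first inequality I compute
\[
2\re(T_W[A])=\De_A^{-*}\bigl\{\De_A^*(W_{11}A+W_{12})+(W_{11}A+W_{12})^*\De_A\bigr\}\De_A^{-1},
\]
and expand the inner bracket; the result matches the expression $\sbm{A^* & I}(J-W^*JW)\sbm{A\\ I}+(A+A^*)$ exactly, using the explicit block form of $J-W^*JW\geq 0$ displayed just after \eqref{Wcond}. Positivity of $J-W^*JW$ applied to $\sbm{A\\I}$ then yields the first inequality. The second inequality is proved along the same lines, but now using \eqref{AltLFT}, which gives $2\re(T_W[A])=\tilde{\De}_A^{-1}\{\cdots\}\tilde{\De}_A^{-*}$; the inner bracket matches $\sbm{A & I}(J-\wtilW J\wtilW^*)\sbm{A^*\\I}+(A+A^*)$, and the required bound follows once one knows that $J-\wtilW J\wtilW^*\geq 0$.

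The main obstacle is therefore the \emph{row} $J$-contractivity $\wtilW J\wtilW^*\leq J$ needed in this last step, whereas \eqref{Wcond} only supplies the \emph{column} version. My plan: conjugate $W^*JW\leq J$ by $W^{-*}$ and $W^{-1}$ to obtain $W^{-*}JW^{-1}\geq J$; use $\whatJ J\whatJ=-J$ together with $\wtilW=\whatJ W^{-1}\whatJ$ to translate this into $\wtilW^*J\wtilW\leq J$; and then invoke the standard equivalence, for invertible operators with respect to a self-inverse signature, of column- and row-$J$-contractivity, cf.\ \cite[Section~2.3]{AD08}, to upgrade this to $\wtilW J\wtilW^*\leq J$. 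Finally, \eqref{DougImplic} follows at once from the first two parts of \eqref{UseIneq} by Douglas' lemma \cite{D66}, which supplies contractions $M_A$ and $\wtilM_A$ realizing the required factorizations.
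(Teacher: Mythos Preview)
Your proposal is correct and follows essentially the same route as the paper. For the two inequalities you do exactly what the paper does (compute $2\re(T_W[A])$ via the respective representation and invoke $J$-contractivity), and your derivation of $\wtilW J\wtilW^*\leq J$ from \eqref{Wcond} via $\whatJ J\whatJ=-J$ and the column/row equivalence is precisely the paper's argument (it cites \cite[Lemma~2.3]{AD08} for the same step). One cosmetic slip: in your second-inequality bracket the row vector should be $\sbm{I & A}$, not $\sbm{A & I}$, since $\sbm{I & A}\wtilW=\sbm{\tilde{\De}_A & \wtilW_{12}+A\wtilW_{22}}$ is what matches the expansion.

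The only genuine stylistic difference is in the difference identity. You expand the bracket $(\wtilW_{12}+A\wtilW_{22})\De_B-\tilde{\De}_A(W_{11}B+W_{12})$ term by term using the four block relations extracted from $\whatJ\wtilW\whatJ\, W=I$. The paper instead packages this into the single identity $\wtilW\whatJ W=\whatJ$ together with the column/row representations
\[
W\sbm{A\\I}=\sbm{T_W[A]\\I}\De_A,\qquad \sbm{I & B}\wtilW=\tilde{\De}_B\sbm{I & T_W[B]},
\]
which yields
\[
\tilde{\De}_B\,(T_W[A]-T_W[B])\,\De_A=\sbm{I & B}\wtilW\whatJ W\sbm{A\\I}=\sbm{I & B}\whatJ\sbm{A\\I}=A-B
\]
in one line (then swap $A\leftrightarrow B$). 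Your version is the same computation written out entrywise; the paper's is more compact but not different in substance.
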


The first inequality in fact holds without the invertibility of $W$ as well. Moreover, the fact that $T_W$ maps $\fD_W$ into $\PR(\cH)$, as claimed in the introduction, follows directly from the two inequalities in \eqref{UseIneq}.

\begin{proof}[\bf Proof of Lemma \ref{L:UseIneq}]
Using the two representations of $T_W$ given in \eqref{LFT} and \eqref{AltLFT} one easily verifies that
\begin{align*}
W\mat{c}{A\\I} &=\mat{c}{T_W[A]\\I}(W_{21}A+W_{22}),\\
\mat{cc}{I&A}\wtilW &=(\wtilW_{11}+A\wtilW_{21})\mat{cc}{I&T_W[A]}.
\end{align*}
Note that

Set $J_1=\sbm{0&I\\I&0}$ and $J_2=\sbm{I&0\\0&-I}$, both in $\cL(\cH\oplus\cH)$. Then
\begin{align*}
\wtilW \whatJ W=\whatJ,\quad W^*J W\leq J,\quad W J W^*\leq J.
\end{align*}
The identity follows directly from $\wtilW \whatJ=\whatJ W^{-1}$, the first inequality holds by assumption and the second inequality is a consequence of the first, cf., Lemma 2.3 in \cite{AD08}. The first inequality of \eqref{UseIneq} then follows from
\begin{align*}
&2(W_{21}A+W_{22})^*\re(T_W[A])(W_{21}A+W_{22})=\\
&\qquad=-(W_{21}A+W_{22})^*\mat{c}{T_W[A]\\I}^*J\mat{c}{T_W[A]\\I}(W_{21}A+W_{22})\\
&\qquad=-\mat{c}{A\\I}^*W^*J_1W\mat{c}{A\\I}
\leq\mat{c}{A\\I}^*J\mat{c}{A\\I}
=2\re(A).
\end{align*}
The second inequality is proved in a similar way, using the fact that $\wtilW$ is also $J$-contractive, which is a consequence of $\whatJ J \whatJ=-J$, details are left to the reader. The existence of contractions $M_A$ and $\wtilM_A$ satisfying \eqref{DougImplic} now follows directly from Douglas' lemma. Finally, the identity in \eqref{UseIneq} is a consequence of
\begin{align*}
&(\wtilW_{11}+B\wtilW_{21})(T_W[A]-T_W[B])(W_{21}A+W_{22})=\\
&\qquad =(\wtilW_{11}+B\wtilW_{21})\mat{cc}{I&T_W[B]}\whatJ\mat{c}{T_w[A]\\I}(W_{21}A+W_{22})\\
&\qquad =\mat{cc}{I&B}\wtilW \whatJ W\mat{c}{A\\I}=\mat{cc}{I&B}\whatJ\mat{c}{A\\I}=A-B.\qedhere
\end{align*}
\end{proof}

We now prove our second main result, Theorem \ref{T:MainEquiv}, in an extended form.

\begin{theorem}\label{T:MainEquivSpec}
Let $W\in \cL(\cH\oplus\cH)$ be invertible and assume \eqref{Wcond} is satisfied. Then $T_W$ preserves the equivalence relation $\sim$ on $\PR(\cH)$ restricted to $\fD_W$. More specifically, if $A\sim B$ for $A,B\in\fD_W$, say $A-B=\re(A)^\half \wtilX \re(B)^\half$ for $\wtilX\in \cL(\fR_B)$. Then
\begin{equation}\label{EquivPresId}
T_W[A]-T_W[B]=\re(T_W[A])^\half \wtilX_W \re(T_W[B])^\half\ \ \mbox{with}\ \ \wtilX_W=\wtilM_A^* \wtilX M_B.
\end{equation}
Here $M_B$ and $\wtilM_A$ are defined according to \eqref{DougImplic}. In particular, $\|\wtilX_W\|\leq \|\wtilX\|$.
\end{theorem}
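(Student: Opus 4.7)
The statement is tailor-made for the three identities assembled in Lemma~\ref{L:UseIneq}, so the strategy is simply to chain them together. First I would start from the third identity of \eqref{UseIneq},
\[
T_W[A]-T_W[B]=(\wtilW_{11}+A\wtilW_{21})^{-1}(A-B)(W_{22}+W_{21}B)^{-1},
\]
and insert the given factorization $A-B=\re(A)^{1/2}\wtilX\re(B)^{1/2}$ for the middle factor. This exhibits $T_W[A]-T_W[B]$ as the product of five operators
\[
(\wtilW_{11}+A\wtilW_{21})^{-1}\re(A)^{1/2}\;\cdot\;\wtilX\;\cdot\;\re(B)^{1/2}(W_{22}+W_{21}B)^{-1}.
\]

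Next, I would bring in the two Douglas factorizations in \eqref{DougImplic}. The first relation, applied to $B$, gives $\re(B)^{1/2}(W_{22}+W_{21}B)^{-1}=M_B\re(T_W[B])^{1/2}$ with $M_B$ a contraction. The second relation, applied to $A$ and adjointed, gives $(\wtilW_{11}+A\wtilW_{21})^{-1}\re(A)^{1/2}=\re(T_W[A])^{1/2}\wtilM_A^{*}$ with $\wtilM_A$ a contraction. Substituting both identities yields
\[
T_W[A]-T_W[B]=\re(T_W[A])^{1/2}\,\bigl(\wtilM_A^{*}\wtilX M_B\bigr)\,\re(T_W[B])^{1/2},
\]
which is precisely \eqref{EquivPresId} with $\wtilX_W=\wtilM_A^{*}\wtilX M_B$.

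Finally, contractivity of $\wtilM_A$ and $M_B$ gives $\|\wtilX_W\|\leq\|\wtilM_A\|\,\|\wtilX\|\,\|M_B\|\leq\|\wtilX\|$. Since \eqref{EquivPresId} is the factorization in characterization (ERi) of Theorem~\ref{T:eqrel}, the relation $T_W[A]\sim T_W[B]$ on $\fD_W$ follows automatically, so no separate verification of symmetry is required.

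The one step that needs a small amount of care — and is the only real obstacle — is to confirm that $\wtilX_W$ genuinely lives in $\cL(\fR_{T_W[B]},\fR_{T_W[A]})$ (or can be regarded as such) so that it serves in the role of the operator in (ERi). This amounts to keeping track of the domains and codomains produced by Douglas' lemma: $M_B$ maps (the closure of the range of $\re(T_W[B])^{1/2}$, i.e.) $\fR_{T_W[B]}$ into $\fR_B$, while $\wtilM_A^{*}$ maps $\fR_A$ into $\fR_{T_W[A]}$, so the composition with $\wtilX\in\cL(\fR_B,\fR_A)$ sits between the correct support subspaces. Once this bookkeeping is in place, the theorem is proved.
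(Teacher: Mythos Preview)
Your proposal is correct and follows exactly the route taken in the paper: insert the factorization of $A-B$ into the third identity of \eqref{UseIneq}, then absorb the outer factors using the Douglas relations \eqref{DougImplic} (the first applied with $B$, the adjoint of the second applied with $A$), and conclude the norm bound from the contractivity of $M_B$ and $\wtilM_A$. The additional remark on the domains and codomains of $\wtilX_W$ is a useful clarification that the paper leaves implicit.
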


\begin{proof}[\bf Proof.]
The identity \eqref{EquivPresId} follows after inserting $A-B=\re(A)^\half \wtilX \re(B)^\half$ into the identity in \eqref{UseIneq} and applying the identities in \eqref{DougImplic}, with $A$ replaced by $B$ in the first identity. Since $M_B$ and $\wtilM_A$ are contractions, we find $\|\wtilX_W\|\leq \|\wtilX\|$.
\end{proof}

Next we prove our first main result, Theorem \ref{T:MainPrec}, again in an extended form.

\begin{theorem}\label{T:MainPrecSpec}
Let $W\in \cL(\cH\oplus\cH)$ be invertible and assume \eqref{Wcond} is satisfied. Then $T_W$ preserves the pre-order $\prec$ on $\PR(\cH)$ restricted to $\fD_W$. More specifically, if $A\prec B$ for $A,B\in\fD_W$, say $A-B=\re(B)^\half \wtilX \re(B)^\half$ for $X\in \cL(\fR_B)$. Then
\begin{equation}\label{PrecPresId}
\begin{aligned}
&T_W[A]-T_W[B]=\re(T_W[B])^\half X_W \re(T_W[B])^\half,\\
&\mbox{with }\ \  X_W= \wtilM_B^*(I-X\re(B)^\half\wtilW_{21}(\wtilW_{11}+A\wtilW_{21})^{-1}\re(B)^\half)XM_B.
\end{aligned}
\end{equation}
Here $M_B$ and $\wtilM_A$ are defined according to \eqref{DougImplic}.
\end{theorem}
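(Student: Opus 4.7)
The plan is to parallel the proof of Theorem \ref{T:MainEquivSpec}, with one extra resolvent step inserted to deal with the fact that for $\prec$ the operator $X$ is sandwiched between two copies of $\re(B)^\half$ rather than between $\re(A)^\half$ and $\re(B)^\half$, so that the Douglas identity \eqref{DougImplic} at $A$ cannot be applied directly to the outer factor.

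First I would start from the identity in \eqref{UseIneq},
\[
T_W[A]-T_W[B]=(\wtilW_{11}+A\wtilW_{21})^{-1}(A-B)(W_{22}+W_{21}B)^{-1},
\]
and then replace $(\wtilW_{11}+A\wtilW_{21})^{-1}$ by $(\wtilW_{11}+B\wtilW_{21})^{-1}$ by means of the elementary resolvent identity
\[
(\wtilW_{11}+A\wtilW_{21})^{-1}=(\wtilW_{11}+B\wtilW_{21})^{-1}-(\wtilW_{11}+B\wtilW_{21})^{-1}(A-B)\wtilW_{21}(\wtilW_{11}+A\wtilW_{21})^{-1},
\]
which follows immediately from $\wtilW_{11}+A\wtilW_{21}=(\wtilW_{11}+B\wtilW_{21})+(A-B)\wtilW_{21}$.

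Next I would plug this into the expression for $T_W[A]-T_W[B]$ and substitute $A-B=\re(B)^\half X\re(B)^\half$ in every occurrence of $A-B$. This produces a sum of two terms, each of which begins with $(\wtilW_{11}+B\wtilW_{21})^\mathrm{-1}\re(B)^\half$ and ends with $\re(B)^\half(W_{22}+W_{21}B)^{-1}$. By the Douglas identities \eqref{DougImplic} applied to $B$ (the second after taking adjoints), these outer blocks equal $\re(T_W[B])^\half \wtilM_B^*$ and $M_B\re(T_W[B])^\half$ respectively, so that $\re(T_W[B])^\half$ pops out to the far left and far right. Factoring $\wtilM_B^*$ on the left and $XM_B$ on the right of what remains in the middle then collects the inner expression into exactly the bracket in \eqref{PrecPresId}, yielding the stated formula for $X_W$.

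The step I expect to be most delicate is the bookkeeping on subspaces: $X$ lives in $\cL(\fR_B)$, and for $T_W[A]\prec T_W[B]$ to follow via (POi) one needs $X_W\in\cL(\fR_{T_W[B]})$. This works because $\re(B)^\half$ has range in $\fR_B$, so the inner expression $I-X\re(B)^\half\wtilW_{21}(\wtilW_{11}+A\wtilW_{21})^{-1}\re(B)^\half$ is a well-defined operator in $\cL(\fR_B)$, while the Douglas contractions obtained from Lemma \ref{L:UseIneq} satisfy $M_B,\wtilM_B\in\cL(\fR_{T_W[B]},\fR_B)$; composing them then lands $X_W$ in $\cL(\fR_{T_W[B]})$, which is exactly the statement $T_W[A]\prec T_W[B]$.
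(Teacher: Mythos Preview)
Your proposal is correct and follows essentially the same route as the paper: both arguments insert $A-B=\re(B)^{1/2}X\re(B)^{1/2}$ into the identity in \eqref{UseIneq}, use the resolvent identity for $(\wtilW_{11}+A\wtilW_{21})^{-1}$ versus $(\wtilW_{11}+B\wtilW_{21})^{-1}$, and then invoke the two Douglas identities \eqref{DougImplic} at $B$ to pull out $\re(T_W[B])^{1/2}$ on both sides. The only difference is the order of operations---you apply the resolvent identity first and then both Douglas identities at once, whereas the paper first handles the right factor with $M_B$ and then treats the left factor via the resolvent identity and $\wtilM_B$---but the content is identical.
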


\begin{proof}[\bf Proof.]
Inserting $A-B=\re(B)^\half \wtilX \re(B)^\half$ into the identity in \eqref{UseIneq} and applying the first identity in \eqref{DougImplic}, with $A$ replaced by $B$, yields
\[
T_W[A]-T_W[B]=(\wtilW_{11}+A\wtilW_{21})^{-1} \re(B)^\half X_W \re(T_W[B])^\half.
\]
Hence, in order to complete the proof we have to show that
\begin{align*}
&(\wtilW_{12}+A\wtilW_{21})^{-1}\re(B)^\half=\\
&\qquad=\re(T_W[B])^\half \wtilM_B^*(I-X\re(B)^\half\wtilW_{21}(\wtilW_{11}+A\wtilW_{21})^{-1}\re(B)^\half).
\end{align*}
To see that this is the case first note that
\begin{align*}
&(\wtilW_{11}+B\wtilW_{21})^{-1}-(\wtilW_{11}+A\wtilW_{21})^{-1}=\\
&\qquad =(\wtilW_{11}+B\wtilW_{21})^{-1} [(\wtilW_{11}+A\wtilW_{21})-(\wtilW_{11}+B\wtilW_{21})] (\wtilW_{11}+A\wtilW_{21})^{-1}\\
&\qquad =(\wtilW_{11}+B\wtilW_{21})^{-1}(A-B)\wtilW_{21} (\wtilW_{11}+A\wtilW_{21})^{-1}\\
&\qquad =(\wtilW_{11}+B\wtilW_{21})^{-1}\re(B)^\half X\re(B)^\half\wtilW_{21} (\wtilW_{11}+A\wtilW_{21})^{-1}\\
&\qquad =\re(T_W[B])^\half\wtilM_B^*X\re(B)^\half\wtilW_{21} (\wtilW_{11}+A\wtilW_{21})^{-1}.
\end{align*}
Hence, we have
 \begin{align*}
&(\wtilW_{12}+A\wtilW_{21})^{-1}\re(B)^\half=\\
&\qquad =(\wtilW_{12}+B\wtilW_{21})^{-1}\re(B)^\half-\\ &\qquad\qquad+((\wtilW_{11}+B\wtilW_{21})^{-1}-(\wtilW_{11}+A\wtilW_{21})^{-1})\re(B)^\half\\
&\qquad =\re(T_W[B])^\half\wtilM_B^*-\\
&\qquad\qquad+\re(T_W[B])^\half\wtilM_B^*X\re(B)^\half\wtilW_{21} (\wtilW_{11}+A\wtilW_{21})^{-1}\re(B)^\half\\
&\qquad =\re(T_W[B])^\half\wtilM_B^*(I-
X\re(B)^\half\wtilW_{21} (\wtilW_{11}+A\wtilW_{21})^{-1}\re(B)^\half),
\end{align*}
as claimed.
\end{proof}

\section{An application to the Carath\'eodory class}
\label{S:Cara}\setcounter{equation}{0}

Throughout this section $\cU$ is a finite dimensional Hilbert space. We extend the pre-order $\prec$ and equivalence relation $\sim$ of Section \ref{S:pre-eq} to the Carath\'eodory class $\fC(\cU)$ and prove Theorems \ref{T:MainCaraEquiv} and \ref{T:MainCaraPrec}.

We start with some preliminaries. The operations $\re$ and $*$ are extended to $\fC(\cU)$ pointwise, i.e., for $F\in\fC(\cU)$ we define $\re(F)$ and $F^*$ by $\re(F)(\la)=\re(F(\la))$ and $F^*(\la)=F(\la)^*$, $\la\in\BD$. Recall that $\fR_F:=\fR_{F(\la)}$ is independent of the choice of $\la\in\BD$.

The following theorem provides several characterizations of the pre-order defined in \eqref{CaraPre} and the related equivalence relation.

\begin{theorem}\label{T:preceqC}
Let $F,G\in\fC(\cU)$. Then the relation $F  \precc G$ defined by one of the following four equivalent conditions:
\begin{itemize}
\item[(CPOi)]
$F-G= \re(G)^{\half}Q \re(G)^{\half}$  for a bounded $\cL(\fR_G)$-valued function $Q$ on $\BD$;

\item[(CPOii)]
$F^*\!+\!G\!=\! \re(G)^{\half}R\re(G)^{\half}$ for a bounded $\cL(\fR_G)$-valued function $R$ on $\BD$;

\item[(CPOiii)]
there exists an $s>0$ with $G+\vep(F-G)\in\fC(\cU)$ for all $\vep\in\BC$ with $|\vep|\leq s$;

\item[(CPOiv)]
there exists an $s>0$ with $G+\vep(F-G)\in\fC(\cU)$ for all $\vep\in\BC$ with $|\vep|= s$;

\end{itemize}
defines a pre-order relation on $\PR(\cH)$. Furthermore, we have $F\precc G$ and $G\precc F$ (denoted $F\simc G$) if and only if $\fR_F=\fR_G$ and one of the following equivalent statements holds:
\begin{itemize}
\item[(CERi)]
$F-G=\re(F)^\half\wtilQ\re(G)^\half$
for a bounded $\cL(\fR_G)$-valued function $\wtilQ$ on $\BD$;

\item[(CERii)]
$F^*\!+\!G\!=\!\re(F)^\half\wtilR\re(G)^\half$
for a bounded $\cL(\fR_G)$-valued function $\wtilR$ on $\BD$;

\item[(CERiii)]
there exists an $\wtil{s}>0$ with $\de G+(1-\de)F+\vep(F-G)\in\fC(\cU)$ for all $\de\in[0,1]$ and $\vep\in\BC$ with $|\vep|\leq \wtil{s}$;

\item[(CERiv)]
there exists an $\wtil{s}>0$ with $\de G+(1-\de)F+\vep(F-G)\in\fC(\cU)$ for all $\de\in[0,1]$ and $\vep\in\BC$ with $|\vep|=\wtil{s}$.
\end{itemize}
\end{theorem}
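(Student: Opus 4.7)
The plan is to reduce the theorem to pointwise applications of Theorems \ref{T:prec} and \ref{T:eqrel}, using the quantitative estimates in those results to convert pointwise facts into the uniform (bounded) operator-valued statements required. Two structural observations make this reduction clean. First, $\fR_G$ is independent of $\la\in\BD$, so a decomposition like (CPOi) is unambiguous with $Q$ taking values in the single space $\cL(\fR_G)$. Second, since $\cU$ is finite dimensional and $\fR_G=\cran\re(G(\la))$ for every $\la\in\BD$, the operator $\re(G)(\la)^{\half}$ restricts to an invertible operator on $\fR_G$ at each $\la$. Consequently, if it exists, the solution $Q(\la)\in\cL(\fR_G)$ of $F(\la)-G(\la)=\re(G)(\la)^{\half}Q(\la)\re(G)(\la)^{\half}$ is unique, and the only nontrivial issue in proving (CPOi) is boundedness of the pointwise $Q$.

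For the pre-order part, the equivalence (CPOi)$\Llra$(CPOii) is immediate from the identities \eqref{ReAReB} with $R=Q^*+2I$. (CPOiii)$\Llra$(CPOiv) follows verbatim from the convexity argument in the proof of Theorem \ref{T:prec}, applied pointwise and with $\fC(\cU)$ in place of $\PR(\cH)$. For (CPOi)$\Lra$(CPOiii), one writes $\re(G+\vep(F-G))=\re(G)^{\half}(I+\re(\vep Q))\re(G)^{\half}\geq0$ pointwise whenever $|\vep|\leq 1/\sup_{\la}\|Q(\la)\|$. Conversely, Theorem \ref{T:prec} applied pointwise yields the unique $Q(\la)\in\cL(\fR_G)$ with the uniform bound $\|Q(\la)\|\leq 2/s$ coming from \eqref{XYrIneqs}, so $Q$ is bounded. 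Reflexivity of $\precc$ is trivial. For transitivity, assuming $F\precc G$ and $G\precc H$ with bounded witnesses $Q_1,Q_2$, pointwise Lemma \ref{L:defrel1} applied to $G(\la)\prec H(\la)$ produces a factorization $\re(G)^{\half}(\la)=M(\la)\re(H)^{\half}(\la)$ with $M(\la)^*M(\la)=\re(Q_2(\la))+I$ by \eqref{XYrel}; boundedness of $Q_2$ then forces boundedness of $M$, hence of the witness $Q:=M^*Q_1M+Q_2\in\cL(\fR_H)$ for $F\precc H$.

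The characterization of $\simc$ follows the same template applied to Theorem \ref{T:eqrel}. The equality $\fR_F=\fR_G$ is immediate from pointwise Lemma \ref{L:defrel2}. To pass from $F\simc G$ to (CERi) (or (CERii)), one applies Theorem \ref{T:eqrel}(i) pointwise: the bound $\|\wtilQ(\la)\|\leq\|\re(Q'(\la))+I\|\,\|Q(\la)\|$ in terms of the bounded functions $Q,Q'$ that witness $F\precc G$ and $G\precc F$ makes $\wtilQ$ bounded. For the converse direction, Theorem \ref{T:eqrel}(ii) gives the pointwise reverse bound $\max\{\|Q(\la)\|,\|Q'(\la)\|\}\leq\half(\|\wtilQ(\la)\|+\sqrt{1+\|\wtilQ(\la)\|})\|\wtilQ(\la)\|$ (and an analogous one involving $\wtilR$), so boundedness of $\wtilQ$ (or $\wtilR$) propagates back to boundedness of $Q,Q'$ and hence to $F\simc G$. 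Finally, (CERiii)--(CERiv) are handled by the convexity argument of Theorem \ref{T:eqrel}, applying (CPOiii) in both directions and setting $\wtil{s}=\min\{s,s'\}$. The main obstacle is purely the quantitative bookkeeping: the pointwise results of Section \ref{S:pre-eq} yield operators at each $\la$, so one must rely on the explicit norm estimates in \eqref{XYrIneqs} and in Theorem \ref{T:eqrel}(i),(ii) to conclude that these operator-valued functions are bounded on $\BD$.
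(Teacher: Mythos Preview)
Your proposal is correct and follows essentially the same approach as the paper: reduce to pointwise applications of Theorems \ref{T:prec} and \ref{T:eqrel}, then invoke the explicit norm estimates in \eqref{XYrIneqs} and Theorem \ref{T:eqrel}(i)--(iii) to upgrade the pointwise data to uniformly bounded operator-valued functions on $\BD$. The paper's own proof is a terse two-sentence version of exactly this reduction; your write-up simply spells out each implication and the transitivity argument in detail.
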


Similar relations exist between the supremum norms of the functions $Q$, $R$, $\wtilQ$ and $\wtilR$ and the numbers $s$ and $\wtil{s}$ as were derived for $X$, $Y$, $\wtilX$, $\wtilY$, $r$ and $\wtil{r}$  in Theorems \ref{T:MainPrecSpec} and \ref{T:MainEquivSpec}. However, we have no need for them in the sequel of the present paper.

\begin{proof}[\bf Proof of Theorem \ref{T:preceqC}]
The pointwise equivalences of (CPRi)--(CPRiv) and of (CERi)--(CERiv), i.e., with the equalities and inclusions at specified points of $\BD$ (and possibly different $r$ and $\wtil{r}$ at different points) follow immediately from the first parts of Theorems \ref{T:MainPrecSpec} and \ref{T:MainEquivSpec}, respectively. Hence we obtain the equivalence of (CPRi)--(CPRiv) and of (CERi)--(CERiv) without the boundedness constraint in (CPRi), (CPRii), (CERi) and (CERii) and with $s$ and $\wtil{s}$ in (CPRiii), (CPRiv), (CERiii) and (CERiv) possibly dependent of the point in $\BD$. The fact that we have equivalence with the boundedness conditions on $Q$, $R$, $\wtilQ$ and $\wtilR$ and with $s$ and $\wtil{s}$ independent of the point in $\BD$, follows directly from the inequalities in \eqref{XYrIneqs} and in items (i)--(iii) in Theorem \ref{T:MainEquivSpec}.
\end{proof}

As observed in the introduction, the interesting implications of $\precc$ appear on the boundary.

\begin{proposition}\label{P:limbehavior}
Let $F,G\in \fC(\cU)$ such that $F\precc G$, $u\in\cU$, and $t\mapsto \la_t$, $t\in(0,1]$ be a continuous curve in $\ov{\BD}$ with $\la_t\in\BD$ whenever $t\in(0,1)$. Assume $G(\la_1)$ exists in $\PR(\cU)$. Then $\lim_{t\uparrow 1} G(\la_t)u=0$ implies $\lim_{t\uparrow 1} F(\la_t)u=0$. In particular, if $\beta\in\BT$ and $\lim_{\la\to\beta}G(\la)u=0$ nontangentially (respectively unrestrictedly), then $\lim_{\la\to\beta}F(\la)u=0$ nontangentially (respectively unrestrictedly).
\end{proposition}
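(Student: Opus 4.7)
The plan is to exploit the representation (CPOi) of the pre-order: $F \precc G$ means
\[
F(\la) - G(\la) = \re(G)(\la)^{1/2}\, Q(\la)\, \re(G)(\la)^{1/2} \qquad (\la \in \BD),
\]
for some bounded $\cL(\fR_G)$-valued function $Q$ on $\BD$, say with $\sup_{\la \in \BD}\|Q(\la)\| \leq M < \infty$. Evaluating along the given curve gives
\[
F(\la_t)u = G(\la_t)u + \re(G)(\la_t)^{1/2}\, Q(\la_t)\, \re(G)(\la_t)^{1/2}\, u,
\]
so it suffices to show the second summand vanishes as $t \uparrow 1$.

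The first step is to convert $G(\la_t)u \to 0$ into vanishing of the inner square-root vector. Taking inner products with $u$ and real parts,
\[
\|\re(G)(\la_t)^{1/2} u\|^2 = \langle \re(G)(\la_t)\, u, u\rangle = \re \langle G(\la_t)u, u\rangle \longrightarrow 0,
\]
so $\re(G)(\la_t)^{1/2} u \to 0$ in $\cU$. Uniform boundedness of $Q$ then gives $Q(\la_t)\re(G)(\la_t)^{1/2}u \to 0$.

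The second step is to bound the outer factor $\re(G)(\la_t)^{1/2}$ in operator norm near $t = 1$. By continuity of the curve on $(0,1]$ together with the hypothesis that $G(\la_1) \in \PR(\cU)$ exists, we have $G(\la_t) \to G(\la_1)$ as $t \uparrow 1$ (automatic when $\la_1 \in \BD$, and by the assumed boundary value when $\la_1 \in \BT$). Taking real parts and invoking continuity of the operator square root on positive operators in our finite-dimensional setting yields $\re(G)(\la_t)^{1/2} \to \re(G(\la_1))^{1/2}$. In particular $\|\re(G)(\la_t)^{1/2}\|$ is bounded in a neighborhood of $t = 1$, and
\[
\bigl\|\re(G)(\la_t)^{1/2} Q(\la_t) \re(G)(\la_t)^{1/2} u\bigr\| \leq \|\re(G)(\la_t)^{1/2}\|\, M\, \|\re(G)(\la_t)^{1/2} u\| \longrightarrow 0,
\]
whence $F(\la_t)u \to 0$.

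The \emph{in particular} clause is then immediate: nontangential (respectively unrestricted) convergence to $\be \in \BT$ can be tested along continuous curves in $\BD$ approaching $\be$ within a Stolz angle (respectively without restriction), and for each such curve the first part applies with $\la_1 = \be$, the existence of $G(\be) \in \PR(\cU)$ being exactly the hypothesis needed. The main obstacle is really the boundedness of $\re(G)(\la_t)^{1/2}$ at a boundary endpoint: without the existence hypothesis on $G(\la_1)$ the operator norm of $\re(G)(\la_t)$ could a priori blow up, and no amount of control on $Q(\la_t)\re(G)(\la_t)^{1/2} u$ would close the argument; the assumption $G(\la_1) \in \PR(\cU)$ is precisely what rules this out.
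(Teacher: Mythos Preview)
Your proof is correct and follows essentially the same route as the paper's: both exploit the representation (CPOi), bound $\|F(\la)u-G(\la)u\|$ by $\|\re(G)(\la)^{1/2}\|\cdot\|Q\|_\infty\cdot\|\re(G)(\la)^{1/2}u\|$, control the inner factor via $\|\re(G)(\la)^{1/2}u\|^2=\re\langle G(\la)u,u\rangle$, and use the existence of $G(\la_1)\in\PR(\cU)$ to bound the outer operator norm near the endpoint. The paper's write-up is slightly terser (it packages the inner-factor estimate as the single inequality $\|\re(G)(\la)^{1/2}u\|^2\leq\|u\|\,\|G(\la)u\|$ via Cauchy--Schwarz), but the content is identical.
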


\begin{proof}[\bf Proof.]
First observe that for any $u\in\cU$ and $\la\in\BD$
\begin{align*}
\|\re(G)^\half(\la)u\|^2=|\re\inn{G(\la)u}{u}|\leq\|u\|\|G(\la)u\|.
\end{align*}
Now let $\wtilQ$ be as in (CPOi). Then
\begin{align*}
\|F(\la)u-G(\la)u\|
&\leq \|\re(G)^\half(\la) Q(\la)\re(G)^\half(\la)u\|\\
&\leq\|\re(G)^\half(\la)\| \|Q(\la)\|\|\re(G)^\half(\la)u\|\\
&\leq\|\re(G)^\half(\la)\| \|Q\|_\infty\sqrt{\|u\|\|G(\la)u\|}.
\end{align*}
Since $\|\re(G)^\half(\la_1)\|<\infty$, this inequality shows that $\lim_{t\to 1} G(\la_t)u=0$ implies $\lim_{t\to 1} F(\la_t)u=0$, as claimed.
\end{proof}

Due to the boundedness conditions in the various characterizations of $\precc$ and the fact that functions in $\fC_\circ(\cU)$ need not be bounded on $\BD$, the set of strict Carath\'eodory functions $\fC_\circ(\cU)$ is less well behaved with respect to the pre-order $\precc$ as is the case for strict Schur class functions in connection with the pre-order of \cite{tHsub}. We have to restrict to $\fC(\cU)\cap H^\infty(\cU)$.

\begin{corollary}\label{C:CaraStrict}
The set $\fC_\circ(\cU)\cap H^\infty(\cU)$ forms an equivalence class with respect to $\simc$ and $F\precc G$ holds for any $G\in \fC_\circ(\cU)\cap H^\infty(\cU)$ and $F\in\fC(\cU)\cap H^\infty(\cU)$.
\end{corollary}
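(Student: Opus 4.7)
The plan is to prove the two assertions in sequence. For the second assertion, that $F \precc G$ for any $G\in \fC_\circ(\cU)\cap H^\infty(\cU)$ and $F\in\fC(\cU)\cap H^\infty(\cU)$, the idea is to exhibit the function $Q$ required by (CPOi) explicitly. Since $G\in\fC_\circ(\cU)$, there is $\rho>0$ with $\re(G)(\la)\geq \rho I$ for all $\la\in\BD$, and because $\dim\cU<\infty$ this forces $\fR_G=\cU$ and $\|\re(G)^{-\half}(\la)\|\leq \rho^{-\half}$ uniformly on $\BD$. Then
\[
Q(\la):=\re(G)^{-\half}(\la)\bigl(F(\la)-G(\la)\bigr)\re(G)^{-\half}(\la)
\]
satisfies $F-G=\re(G)^\half Q\re(G)^\half$ pointwise and $\|Q(\la)\|\leq \rho^{-1}(\|F\|_\infty+\|G\|_\infty)<\infty$, so (CPOi) applies.

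For the first assertion, applying the just-established fact in both directions yields $F_1\simc F_2$ for any $F_1,F_2\in\fC_\circ(\cU)\cap H^\infty(\cU)$, so this set is contained in a single $\simc$-equivalence class. It remains to show that if $G\in\fC_\circ(\cU)\cap H^\infty(\cU)$ and $F\simc G$, then $F\in\fC_\circ(\cU)\cap H^\infty(\cU)$. Boundedness of $F$ is immediate from $F=G+\re(G)^\half Q\re(G)^\half$ via (CPOi) for $F\precc G$, since $\re(G)$ is bounded by $\|G\|_\infty$ and $Q$ is bounded by hypothesis.

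To obtain the strict lower bound on $\re(F)$, the plan is to apply Lemma \ref{L:defrel2} pointwise: at each $\la\in\BD$ there is an invertible $M(\la)\in\cL(\cU)$ with $\re(F(\la))^\half=M(\la)\re(G(\la))^\half$, whence $\re(F(\la))=M(\la)\re(G(\la))M(\la)^*\geq \rho\, M(\la)M(\la)^*\geq \rho\,\|M(\la)^{-1}\|^{-2} I$. The crucial point is that $\|M(\la)\|$ and $\|M(\la)^{-1}\|$ are bounded uniformly in $\la$: this is where Theorem \ref{T:eqrel}(ii), which controls the norm of the Douglas-type factor $M$ by $\|\wtilQ\|$ and $\|\wtilR\|$, is applied together with the boundedness built into (CERi) and (CERii) for $F\simc G$. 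With a uniform bound $\|M(\la)^{-1}\|\leq C$ on $\BD$, one obtains $\re(F)\geq \rho C^{-2} I$, so $F\in\fC_\circ(\cU)$.

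The only real obstacle is the uniform control of $\|M(\la)^{-1}\|$; everything else is a pointwise application of material from Section \ref{S:pre-eq}. That control is precisely what the boundedness conditions in the Carath\'eodory versions (CPOi)--(CERii), established in Theorem \ref{T:preceqC}, were designed to provide, so the argument goes through without further work.
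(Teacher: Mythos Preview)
Your argument is correct. The treatment of $F\precc G$ for $G\in\fC_\circ(\cU)\cap H^\infty(\cU)$, and the resulting inclusion of $\fC_\circ(\cU)\cap H^\infty(\cU)$ in a single $\simc$-class, matches the paper exactly. The difference comes in showing that $F\simc G$ with $G\in\fC_\circ(\cU)\cap H^\infty(\cU)$ forces $F\in\fC_\circ(\cU)$. The paper argues by contrapositive via Proposition~\ref{P:limbehavior}: if $F\notin\fC_\circ(\cU)$ then (it asserts) $F(e^{it})u=0$ at some boundary point, and $G\precc F$ would then force $G(e^{it})u=0$, contradicting $G\in\fC_\circ(\cU)$. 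You instead work inside $\BD$: from the pointwise factorization $\re(F)^{1/2}=M\re(G)^{1/2}$ of Lemma~\ref{L:defrel2} you get $\re(F)=M\re(G)M^*\geq\rho\,MM^*\geq\rho\,\|M^{-1}\|^{-2}I$, and the uniform bound on $\|M(\la)^{-1}\|$ follows from $M^{-*}M^{-1}=\tfrac12(X'+Y')$ (Lemma~\ref{L:defrel2}) together with the bounds on $X',Y'$ in Theorem~\ref{T:eqrel}(ii) and the boundedness of $\wtilQ,\wtilR$ guaranteed by (CERi)--(CERii) in Theorem~\ref{T:preceqC}. Your route is entirely quantitative and self-contained within the algebraic machinery of Section~\ref{S:pre-eq}, yielding an explicit lower bound $\re(F)\geq\rho C^{-2}I$; the paper's route ties the corollary to the boundary phenomena that motivate the section, at the cost of invoking the (somewhat delicate) claim that a non-strict bounded Carath\'eodory function must have a genuine boundary zero.
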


\begin{proof}[\bf Proof.]
Note that if $G\in \fC(\cU)\cap H^\infty(\cU)$ and $F\in\fC(\cU)$, then $F\prec G$ implies  $F\in\fC(\cU)\cap H^\infty(\cU)$. Now assume $G\in \fC_\circ(\cU)\cap H^\infty(\cU)$. Then $\re(G)^\half$ is invertible on $\BD$ with $\la\mapsto \re(G)^{-\half}(\la)$ bounded on $\BD$. Hence $F-G=\re(G)^\half Q\re(G)^\half$ with $Q=\re(G)^{-\half}(F-G)\re(G)^{-\half}$ and $\|Q\|_\infty<\infty$ whenever $F\in\fC(\cU)\cap H^\infty(\cU)$. This shows $\fC_\circ(\cU)\cap H^\infty(\cU)$ is included in the equivalence class of any $G\in\fC_\circ(\cU)\cap H^\infty(\cU)$.

If $F\in\fC(\cU)\cap H^\infty(\cU)$, but $F\not\in\fC_\circ(\cU)$, then $F(e^{it})u=0$ for some $u\in\cU$ and $t\in\BR$ such that $F(e^{it})u$ can be defined through its nontangential limits at $e^{it}$. By Proposition \ref{P:limbehavior}, $G\precc F$ would imply $G(e^{it})u=0$, and thus $G\not\in\fC_\circ(\cU)$.
\end{proof}

The following result is a direct consequence of Proposition \ref{P:limbehavior} and the fact that functions in $\fC(\cU)$ are uniquely determined by their nontangential limits.

\begin{corollary}\label{C:CaraInner}
Any $G\in\fC(\cU)\cap H^\infty(\cU)$ with $\re(G)=0$ a.e.\ on $\BT$ forms an equivalence class by itself, and $F\precc G$ implies $F=G$ for any $F\in\fC(\cU)$.
\end{corollary}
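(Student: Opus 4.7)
The plan is to extract the claim directly from characterization (CPOi) of Theorem~\ref{T:preceqC} together with the norm estimate that drives Proposition~\ref{P:limbehavior}. Assume $F\precc G$ with $G\in\fC(\cU)\cap H^\infty(\cU)$ and $\re(G)=0$ a.e.\ on $\BT$. By (CPOi), one may write
\[
F-G \;=\; \re(G)^{1/2}\,Q\,\re(G)^{1/2} \quad\text{on } \BD,
\]
for some bounded $\cL(\fR_G)$-valued function $Q$ on $\BD$.

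First I would note that $\|F(\la)-G(\la)\|\le\|Q\|_\infty\,\|\re(G)(\la)\|\le\|Q\|_\infty\,\|G\|_\infty$, so $F\in H^\infty(\cU)$ (bringing it into the regime where boundary values are well behaved). Both $F$ and $G$ therefore admit nontangential boundary values at a.e.\ $\beta\in\BT$, and by Fatou's theorem the nontangential boundary function of $\re(G)$ coincides a.e.\ with the prescribed boundary function, which is zero.

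Applying the same inequality $\|F(\la)-G(\la)\|\le\|Q\|_\infty\,\|\re(G)(\la)\|$ along a nontangential approach $\la\to\beta$ at such an a.e.\ $\beta$, one obtains $F(\la)-G(\la)\to 0$, so $F(\beta)=G(\beta)$ a.e.\ on $\BT$. This is exactly the mechanism in the proof of Proposition~\ref{P:limbehavior}, simply exercised uniformly in $u\in\cU$ rather than vector by vector. Uniqueness of bounded holomorphic $\cL(\cU)$-valued functions from their nontangential boundary values then forces $F=G$ on $\BD$, which is the second assertion. The first assertion, that $G$ forms a $\simc$-equivalence class by itself, is immediate: if $F\simc G$ then in particular $F\precc G$, so $F=G$.

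The only real subtlety I anticipate is checking that the nontangential boundary function of the operator-valued real part $\re(G)$ can be identified with the prescribed a.e.\ zero boundary data; this is handled by viewing $\re(G)$ as the real part of the $H^\infty(\cU)$ function $G$, applying Fatou's theorem entrywise (which is legitimate because $\cU$ is finite dimensional), and noting that the real part operation commutes with taking nontangential limits of the matrix entries.
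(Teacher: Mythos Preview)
Your argument is correct and follows essentially the same route as the paper: both exploit the (CPOi) factorization to bound $F-G$ in terms of $\re(G)$, let $\re(G)\to 0$ along nontangential approaches to a.e.\ boundary points, and invoke uniqueness from boundary values. Your operator-norm estimate $\|F-G\|\le\|Q\|_\infty\|\re(G)\|$ is in fact slightly cleaner than appealing to Proposition~\ref{P:limbehavior} as the paper does, since the statement of that proposition requires $G(\la_t)u\to 0$ rather than merely $\re(G)(\la_t)\to 0$; one really needs the estimate displayed in its proof (where the factor $\|\re(G)^{1/2}(\la)\|$ already tends to zero while $\|G(\la)u\|$ stays bounded by $\|G\|_\infty\|u\|$), which is exactly what you have written out directly.
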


Next we show that the functions that establish the relations $\precc$ and $\simc$ are continuous.

\begin{proposition}\label{P:contin}
Let $F,G\in\fC(\cU)$ such that $F\precc G$ (resp.\ $F\simc G$). Then the functions $R$ in (CPOi) and $Q$ in (CPii) (resp.\ $\wtilR$ in (CERi) and $\wtilQ$ in (CERii)) are continuous on $\BD$.
\end{proposition}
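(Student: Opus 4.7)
The plan is to exploit the finite-dimensionality of $\cU$ together with the fact (already established via pointwise equivalence through Shmul'yan's theorem, as explained in the introduction) that $\fR_G$ is a fixed subspace of $\cU$ independent of $\la\in\BD$, combined with the continuity of the square root on strictly positive operators in finite dimensions. The strategy in each case is to write the relevant operator function via an explicit formula as a composition of continuous maps.

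First I would note that since $\cU$ is finite dimensional, for every $\la\in\BD$ the restriction $\re(G)(\la)|_{\fR_G}$ is a strictly positive (hence invertible) operator on $\fR_G$, because $\fR_G=\ran\re(G)(\la)=(\kr\re(G)(\la))^\perp$. Since $G$ is holomorphic, $\la\mapsto\re(G)(\la)|_{\fR_G}$ is a continuous map from $\BD$ into the open set of strictly positive operators on $\fR_G$. The positive square root is continuous on this open set (e.g.\ by the Riesz--Dunford functional calculus with a contour separated from $0$), so $\la\mapsto\re(G)(\la)|_{\fR_G}^{1/2}$ and its inverse $\la\mapsto\re(G)(\la)|_{\fR_G}^{-1/2}$ are continuous functions on $\BD$ taking values in $\cL(\fR_G)$.

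Second, I would establish uniqueness of the factorizing operators. For (CPOi), since $\re(G)(\la)^{1/2}$ maps $\cU$ onto $\fR_G$ and restricts to a bijection of $\fR_G$, the identity $F(\la)-G(\la)=\re(G)(\la)^{1/2}Q(\la)\re(G)(\la)^{1/2}$ with $Q(\la)\in\cL(\fR_G)$ forces, letting $P$ denote the orthogonal projection of $\cU$ onto $\fR_G$,
\[
Q(\la)=\re(G)(\la)|_{\fR_G}^{-1/2}\,P\bigl(F(\la)-G(\la)\bigr)P\,\re(G)(\la)|_{\fR_G}^{-1/2}.
\]
This is a composition of continuous maps, hence continuous. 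The same argument applied to $F^*+G$ in (CPOii) gives continuity of $R$.

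For the equivalence case $F\simc G$ we have $\fR_F=\fR_G$, so the analysis of the first step applies equally to $F$, giving continuity of $\la\mapsto\re(F)(\la)|_{\fR_G}^{-1/2}$. By the same uniqueness argument,
\[
\wtilQ(\la)=\re(F)(\la)|_{\fR_G}^{-1/2}\,P\bigl(F(\la)-G(\la)\bigr)P\,\re(G)(\la)|_{\fR_G}^{-1/2},
\]
and analogously for $\wtilR$ with $F^*+G$ replacing $F-G$, yielding continuity of $\wtilQ$ and $\wtilR$. The only subtle point, and the main thing to check, is the pointwise uniqueness of $Q,R,\wtilQ,\wtilR$ inside $\cL(\fR_G)$; this follows from the bijectivity of $\re(G)(\la)^{1/2}$ on $\fR_G$, but it is essential because otherwise the explicit continuous formula above need not coincide with the function given by the hypothesis. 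Finite-dimensionality of $\cU$ enters decisively to guarantee that $\re(G)|_{\fR_G}$ is uniformly invertible on $\fR_G$ at each point, so that the square root is continuous; in the infinite-dimensional setting this step could fail.
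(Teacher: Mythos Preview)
Your proof is correct and takes a genuinely different, more direct route than the paper's. The paper proceeds via an auxiliary lemma (Lemma~\ref{L:contin}): if $H=XYZ$ on $\BD$ with $X$, $Z$, $H$ continuous, $\ran X(\la)^*=\cV=\ran Z(\la)$ for each $\la$, and $Y$ bounded, then $Y$ is continuous. The lemma is proved by an indirect inner-product argument (passing through weak limits and using the surjectivity of $X(\la_0)^*$), and is then applied with $X=Z=\re(G)^{1/2}$, respectively $X=\re(F)^{1/2}$, $Z=\re(G)^{1/2}$, and $Y$ equal to the function in question. Your approach instead observes that on the fixed finite-dimensional subspace $\fR_G$ the compression $\re(G)(\la)|_{\fR_G}$ is strictly positive for every $\la$, so its square root and the inverse of that square root vary continuously; combined with pointwise uniqueness of $Q$ in $\cL(\fR_G)$, this yields an explicit continuous formula $Q(\la)=\re(G)(\la)|_{\fR_G}^{-1/2}\,P(F(\la)-G(\la))P\,\re(G)(\la)|_{\fR_G}^{-1/2}$, and similarly for the other three functions.

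Your argument is shorter and more transparent in the finite-dimensional setting, and in fact never uses the global boundedness of $Q$ (which the paper's lemma needs as a hypothesis), so you actually prove a slightly stronger statement: pointwise existence of $Q(\la)\in\cL(\fR_G)$ already forces continuity. The paper's lemma, by contrast, is formulated for general continuous $X$ and $Z$ with the stated range conditions and does not rely on inverting anything, which makes it a reusable black box; but in this paper it is only invoked (here and in the proof of Theorem~\ref{T:MainCaraEquiv}) in situations where your explicit-inverse argument applies equally well.
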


This result is a direct consequence of the following lemma.

\begin{lemma}\label{L:contin}
Let $\cU$, $\cV$ and $\cW$ be finite dimensional Hilbert spaces. Let $X$, $Y$ and $Z$ be functions on $\BD$ with values in $\cL(\cV,\cW)$, $\cL(\cV)$ and $\cL(\cU,\cV)$ and assume $\ran X(\la)^*=\cV=\ran Z(\la)$ for each $\la\in\BD$. Assume further that $Y$, $Z$ and $H:=XYZ$ are continuous on $\BD$ and $Y$ is bounded on $\BD$. Then $Y$ is continuous on $\BD$ as well.
\end{lemma}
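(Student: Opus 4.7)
The strategy is to recover $Y$ from the identity $H=XYZ$ by left- and right-multiplying by continuous one-sided inverses of $X$ and $Z$, so that continuity of $Y$ follows from a single algebraic formula. Interestingly, the boundedness of $Y$ plays no essential role in this approach; it would instead enter an alternative Bolzano--Weierstrass style argument in which one extracts convergent subsequences of $Y(\la_n)$ and pushes them through $H$.

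First I would translate the rank conditions into invertibility statements on $\cV$. Because $\cV$ is finite dimensional, $\ran X(\la)^*=\cV$ is equivalent to $X(\la)$ being injective, equivalently $X(\la)^*X(\la)\in\cL(\cV)$ being strictly positive and hence invertible; similarly $\ran Z(\la)=\cV$ makes $Z(\la)Z(\la)^*\in\cL(\cV)$ invertible. Both invertibilities hold at every $\la\in\BD$.

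Next I would define the pointwise pseudoinverses
\[
X^+(\la):=(X(\la)^*X(\la))^{-1}X(\la)^*, \qquad Z^+(\la):=Z(\la)^*(Z(\la)Z(\la)^*)^{-1},
\]
which satisfy $X^+X\equiv I_\cV$ and $ZZ^+\equiv I_\cV$. Continuity of $X$ and $Z$ makes $X^*X$ and $ZZ^*$ continuous, and since both take values in the open set of invertible operators on $\cV$ (on which inversion is continuous), the functions $X^+$ and $Z^+$ are continuous on $\BD$. Left-multiplying $H=XYZ$ by $X^+$ and right-multiplying by $Z^+$ then yields
\[
Y \;=\; X^+XYZZ^+ \;=\; X^+HZ^+,
\]
exhibiting $Y$ as a product of continuous functions, which is the claim. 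The only potential obstacle is the standard fact that inversion is continuous on the open set of invertibles in $\cL(\cV)$, which is entirely routine in finite dimensions.
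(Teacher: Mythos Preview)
Your proof is correct and takes a genuinely different, more direct route than the paper's. (Both your argument and the paper's proof use continuity of $X$; the hypothesis ``$Y$, $Z$ and $H$ are continuous'' in the statement is evidently a misprint for ``$X$, $Z$ and $H$ are continuous''.) The paper does not build one-sided inverses. Instead it fixes $\la_0$ and vectors $u\in\cU$, $w\in\cW$, splits
\[
\inn{H(\la)u}{w}=\inn{Y(\la)Z(\la)u}{X(\la_0)^*w}+\inn{Y(\la)Z(\la)u}{(X(\la)^*-X(\la_0)^*)w},
\]
uses the boundedness of $Y$ together with continuity of $X$ to make the second term vanish as $\la\to\la_0$, and then invokes $\ran X(\la_0)^*=\cV$ to deduce weak (hence, in finite dimensions, norm) continuity of $YZ$. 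A second pass with $(YZ)^*=Z^*Y^*$ playing the role of $H$ then peels off $Z^*$ and gives continuity of $Y$. Your pseudoinverse formula $Y=X^+HZ^+$ is shorter, purely algebraic, and---as you observe---does not use the boundedness hypothesis at all, so it in fact proves a slightly stronger statement. The paper's inner-product argument avoids inverting anything, but in this finite-dimensional setting that buys nothing, and it pays for the privilege by needing the extra boundedness assumption on $Y$.
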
 

\begin{proof}[\bf Proof.]
We first show that $YZ$ is continuous on $\BD$. Let $\la_0\in\BD$. Since $\cV$ and $\cW$ be finite dimensional Hilbert spaces, $X$, $Z$ and $H$ are continuous with respect to any topology. Fix $u\in\cU$ and $w\in\cW$. Note that for any $\la\in\BD$ we have
\[
|\inn{Y(\la)Z(\la)u}{(X(\la)^*-X(\la_0)^*)w}|\leq\|Y(\la)\|\|Z(\la)u\|\|(X(\la)^*-X(\la_0)^*)w\|.
\]
Since $\la\mapsto X(\la)^*$ is continuous on $\BD$ and $H$ and $Y$ are bounded on a small enough neighborhood of $\la_0$, the above inequality yields
\[
\lim_{\la\to\la_0}|\inn{Y(\la)Z(\la)u}{(X(\la)^*-X(\la_0)^*)w}|=0.
\] 
Furthermore, we have 
\[
\lim_{\la\to\la_0}\inn{X(\la)Y(\la)Z(\la)u}{w}=\inn{X(\la_0)Y(\la_0)Z(\la_0)u}{w}
=\inn{Y(\la_0)Z(\la_0)u}{X(\la_0)^*w}
\]
and 
\begin{align*}
&\inn{X(\la)Y(\la)Z(\la)u}{w}=
\inn{Y(\la)Z(\la)u}{X(\la)^*w}=\\
&\qquad=\inn{Y(\la)Z(\la)u}{X(\la_0)^*w}+\inn{Y(\la)Z(\la)u}{(X(\la)^*-X(\la_0)^*)w}.
\end{align*}
This shows that 
\[
\lim_{\la\to\la_0}\inn{Y(\la)Z(\la)u}{X(\la_0)^*w}=
\inn{Y(\la_0)Z(\la_0)u}{X(\la_0)^*w}.
\]
Since $u\in\cU$ and $w\in \cW$ were chosen arbitrarily and $\ran X(\la_0)^*=\cV$, we obtain that $\lim_{\la\to\la_0} Y(\la)Z(\la)=Y(\la_0)Z(\la_0)$ for any $\la_0\in\BD$. Hence $YZ$ is continuous on $\BD$. Repeating the argument with $H=Z^*Y^*$, i.e., with $Z^*$ and $Y^*$ in place of $X$ and $Y$ and $Z$ identically equal to the identity operator in $\cV$ we obtain the continuity of $Y$. 
\end{proof}

Next we prove Theorems \ref{T:MainCaraEquiv} and \ref{T:MainCaraPrec}.

\begin{proof}[\bf Proof of Theorem \ref{T:MainCaraEquiv}.]
Let $F,G\in\fC(\cU)$ such that $F\simc G$, say $F-G=\re(F)^\half \wtilQ \re(G)^\half$ on $\BD$ with $\|\wtilQ\|_\infty<\infty$. Then we can apply Theorem \ref{T:MainEquivSpec} pointwise to all $\la\in\BD$, with $W=\Psi(\la)$, except for the few isolated points $\la\in\BD$ where $\Psi(\la)$ is not invertible. We then obtain $\|\wtilQ_\Psi(\la)\|\leq \|\wtilQ(\la)\|$ and
\begin{equation}\label{TFTGsim}
T_\Psi[F](\la)-T_\Psi[G](\la)=\re(T_\Psi[F])^\half(\la) \wtilQ_\Psi(\la) \re(T_\Psi[G])^\half(\la).
\end{equation}
Hence $T_\Psi[F](\la)\sim T_\Psi[G](\la)$ and we have $\|\wtilQ_\Psi(\la)\|\leq \|\wtilQ(\la)\|\leq \|\wtilQ\|_\infty<\infty$. 

Let $\la_0\in\BD$ such that $\Psi(\la_0)$ is invertible. Then for any of the isolated points $\la\in\BD$ where $\Psi(\la)$ is not invertible we have
\[
T_\Psi[F](\la)\sim T_\Psi[F](\la_0)\sim T_\Psi[G](\la_0)\sim T_\Psi[G](\la).
\]
Therefore, \eqref{TFTGsim} holds for all $\la\in\BD$. Lemma \ref{L:contin} shows that $\wtilQ_\Psi$ is continuous on $\BD$, and thus $\|\wtilQ_\Psi(\la)\|\leq \|\wtilQ\|_\infty$ also holds for the isolated points $\la$ where $\Psi(\la)$ is not invertible. Hence $\|\wtilQ_\Psi\|_\infty\leq \|\wtilQ\|_\infty<\infty$ and we obtain that $T_\Psi[F]\simc T_\Psi[G]$ holds via (CPOi).
\end{proof}

\begin{proof}[\bf Proof of Theorem \ref{T:MainCaraPrec}.]
The argumentation is similar to the proof of Theorem \ref{T:MainCaraEquiv}. However, in this case one needs
\[
(I-R\re(G)^\half\wtil{\Psi}_{21}(\wtil{\Psi}_{11}+F\wtil{\Psi}_{21})^{-1}\re(G)^\half)R
\]
to be bounded on $\BD$. The fact that $F,G\in\fD_\Psi^\circ$ implies that $(\wtil{\Psi}_{11}+F\wtil{\Psi}_{21})^{-1}$ and $\re(G)^\half$ exist and are bounded on $\BD$. Hence the result follows.
\end{proof}

\begin{example}\label{E:ce}
Take
\[
\Psi(\la)=\mat{cc}{1-\la & 1+\la \\ 1+\la & 1-\la}\quad (\la\in\BD).
\]
Then
\[
\Psi(\la)^* J_2 \Psi(\la)=\half\mat{cc}{1-|\la|^2 & 1+|\la|^2 \\ 1+|\la|^2 & 1-|\la|^2}\geq J_2 \quad(\la\in\BD)
\]
with equality on $\BT$. Moreover, $\det \Psi(\la)=-\la$, so that $\Psi$ is invertible on $\BD\backslash\{0\}$. Hence $T_\Psi$ defines a self-map of $\fC:=\fC(\BC)$.

Now take $F\equiv 0\in\fC$ and $G\equiv 1\in\fC_\circ$. Then $F\precc G$, by Corollary \ref{C:CaraStrict}. We have $\wtilG:=T_\Psi[G]\equiv 1\in\fC_\circ$ and 
\[
\wtilF(\la):=T_\Psi[F](\la)=\frac{1+\la}{1-\la}\mbox{ with } \re(\wtilF)(\la)=\frac{1-|\la|^2}{|1-\la|^2}\geq 0\quad (\la\in\BD).
\] 
Thus $\wtilF\not\in H^\infty$, and hence $\wtilF\not\precc \wtilG$ (see the proof of Corollary \ref{C:CaraStrict}), even though $\wtilG\in\fC_\circ$.
\end{example}


\end{document}